\newcommand{\ev}[1]{( #1 )} 
\newcommand{\ew}[1]{\left( #1 \right)}
\theoremstyle{definition}
\newtheorem{theorem}{Theorem}[section]
\newtheorem{lemma}[theorem]{Lemma}
\newtheorem{corollary}[theorem]{Corollary}
\newtheorem{remark}{Remark}
\numberwithin{equation}{section}
\newtheorem{definition}{Definition}
\newtheorem{proposition}[theorem]{Proposition}
\title{Multigraphs with Unique Partition into Cycles}
\author[1]{Joshua Cooper}
\author[2]{Utku Okur}
\affil[1]{Department of Mathematics, University of South Carolina, U.S.A. (\href{mailto:email}{cooper@math.sc.edu})}
\affil[2]{Department of Mathematics, University of South Carolina, U.S.A. (\href{mailto:email}{uokur@email.sc.edu})}
\date{\today}
\begin{document}
\maketitle

\begin{abstract}
Due to Veblen's Theorem, if a connected multigraph $X$ has even degrees at each vertex, then it is Eulerian and its edge set has a partition into cycles. In this paper, we show that an Eulerian multigraph has a unique partition into cycles if and only if it belongs to the family $\mathcal{S}$, ``bridgeless cactus multigraphs", elements of which are obtained by replacing every edge of a tree with a cycle of length $\geq 2$. Other characterizing conditions for bridgeless cactus multigraphs and digraphs are provided. 

Furthermore, for a digraph $D$, we list conditions equivalent to having a unique Eulerian circuit, thereby generalizing a previous result of Arratia-Bollob\'{a}s-Sorkin. In particular, we show that digraphs with a unique Eulerian circuit constitute a subfamily of $\mathcal{S}$, namely, ``Christmas cactus digraphs".

\textit{Keywords: Digraph, Multigraph, Partition into Cycles} 

\textit{Mathematics Subject Classifications 2020: Primary 05C75; Secondary 05C45, 05C38, 05C20.} 	
\end{abstract}

\hypersetup{linkcolor=black} 

\tableofcontents

\hypersetup{linkcolor=blue}

\section{Introduction}
Consider a multigraph $X$ without loops. By a theorem of Veblen (\cite[p.~87]{veblen}), if $X$ has even degree at each vertex, then its edge set can be partitioned into edge-disjoint cycles of length $\geq 2$ (A cycle of length $2$ is called \textit{digon}.). The argument goes: Start at any vertex and greedily traverse the edges, as long as the new edge is never visited before. Since there are finitely many edges, the process has to stop. If we never visit a vertex more than once, then the multigraph is actually a path, contradicting the hypothesis that each vertex has even degree. So, we have to visit a vertex for the second time and obtain a cycle. If the multigraph itself is a cycle, then the partition is complete. Otherwise, we subtract the edges of the cycle and as the remainder also satisfies the hypothesis, we can repeat the process to obtain a partition into edge-disjoint cycles. Let us call a multigraph \textit{even} if all vertices of $X$ have even degrees. A natural question arises: Given that a partition into cycles always exists for an even multigraph, then can we say anything about the number of partitions? In this note, we characterize multigraphs $X$ with a unique partition into edge-disjoint cycles. The answer seems intuitively clear: A multigraph has a unique partition into cycles if and only if it is a member of the family $\mathcal{S}$ of multigraphs which is defined as the closure of the set of \textit{all} finite cycles under the operation of appending an edge-disjoint cycle. In other words, a member of $\mathcal{S}$ is obtained by starting with an initial cycle and adding new cycles, at each step, with the condition that the new cycle intersects the old graph in exactly one vertex. This class of graphs is a subclass of the class of ``cactus graphs", which has been considered in the literature (see, for example, \cite{husimi,bahrani}). 

Let us call a multigraph \textit{simple} if it has no parallel edges. In \cite[p.~3]{bahrani}, a \textit{cactus graph} is defined as a simple graph where no two cycles share an edge. Equivalently, if a tree is taken and possibly some of the edges are replaced with a cycle of length $\geq 3$, we obtain a simple cactus graph. Small examples and additional references concerning cactus graphs can be found in \cite{weisstein}. Recall that a \textit{bridge} in a multigraph is an edge such that its removal disconnects the multigraph. Expanding on the terminology of \cite{bahrani}, let us define a \textit{cactus multigraph} to be a multigraph where no two cycles share an edge. If we replace \textit{every} edge of a tree with a cycle of length $\geq 2$, we obtain a bridgeless cactus multigraph. The present manuscript is concerned only with bridgeless cactus undirected multigraphs and directed multigraphs (shortly, \textit{digraphs}). The main theorem (q.v.~\Cref{thm:unique_decomp_equiv_digraph}) states that a connected Eulerian digraph has a unique edge-disjoint partition into cycles if and only if it is a bridgeless cactus digraph.

The original motivation for the characterization of digraphs and multigraphs with a unique partition into cycles is the following: In \cite{co3}, the partition lattice of the edge set $E\ev{D}$ of an Eulerian digraph $D$ is considered. Furthermore, the subposet induced by partitions into Eulerian subdigraphs forms a join-semilattice and is denoted $T\ev{D}$. Partitions of $E\ev{D}$ into disjoint cycles correspond to minimal elements of $T\ev{D}$. It follows from the characterization below and \cite[Lemma~3.1, p.~16]{co3} that $T\ev{D}$ is a lattice if and only if $D$ is a bridgeless cactus digraph. 

In \Cref{sec:unique_eulerian}, we consider connected Eulerian digraphs with a unique Eulerian circuit. It turns out that this condition is equivalent to being a Christmas cactus digraph, which is a special type of a cactus digraph where every vertex belongs to at most two blocks. Applications of Christmas cactus graphs in planar graph embeddings and results on related problems are found in \cite{prutkin, angelini, leighton}. An alternative sufficient condition for having a unique Eulerian circuit is given in \cite{bollobas}, for 2-in, 2-out digraphs (each vertex has in-degree and out-degree $2$), possibly with loops.

For further reference, note that triangular cactus graphs (cactus graphs in which each cycle is a triangle) are investigated in \cite{husimi}, under the name of ``Husimi trees'' and a functional equation for their generating function is obtained. 

For the purposes of exposition, we show our results for digraphs in \Cref{thm:unique_decomp_equiv_digraph} and \Cref{prop:unique_eulerian} of \Cref{sec:digraphs} and derive a counterpart for multigraphs as a corollary (q.v. \Cref{cor:unique_decomp_equiv_multi_graph}) in \Cref{sec:multi_graph_partition}. 

\section{Eulerian Bridgeless Cactus Digraphs}
\label{sec:digraphs}
\subsection{Preliminaries}
In this paper, we write $\mathcal{A}_1 \sqcup \mathcal{A}_2$ for the disjoint union of two sets $\mathcal{A}_1$ and $\mathcal{A}_2$. The power set of a set $\mathcal{A}$ is denoted $\mathcal{P}\ev{\mathcal{A}}$. Define $ \binom{\mathcal{A}}{2} := \{ \{v_1,v_2\} : v_1,v_2 \in \mathcal{A}, \ v_1\neq v_2\}$, for a finite set $\mathcal{A}$. The following is a version of the definition of a graph in \cite[p.~2]{bondy}, stated for loopless multigraphs of rank $2$ (each edge is incident to $2$ vertices).  
\begin{definition}
\label{def:multi_graph}
A \textit{multigraph} $X$ of rank $k = 2$ is a triple $X = \ev{ V\ev{X}, E\ev{X}, \phi }$, where $V\ev{X}, E\ev{X}$ are finite sets and $\phi: E\ev{X} \rightarrow \binom{V\ev{X}}{2} $ is a mapping (there are no loops.)
\begin{enumerate}
\item If $X = \ev{ V\ev{X}, E\ev{X}, \phi }$ is given such that $\phi$ is injective, then $X$ is a \textit{simple} graph. 
\item Two edges $e_1, e_2 \in E\ev{X}$ are \textit{parallel}, provided $\phi\ev{e_1} = \phi\ev{e_2}$. 
\end{enumerate}
\end{definition}

Next, we define multi-directed graphs, or shortly, digraphs: 
\begin{definition}
\label{def:orientations}
A \textit{digraph} $D=\ev{ V\ev{D}, E\ev{D}, \psi}$ of rank $2$ is a triple, where $V\ev{D}, E\ev{D}$ are finite sets and $\psi: E\ev{D} \rightarrow V\ev{D} \times V\ev{D}$ is a function such that coordinates of $\psi\ev{e}$ are distinct, for each edge $e\in E\ev{D}$. (There are no loops.) 
\begin{enumerate}
\item Two edges $e_1, e_2 \in E\ev{D}$ are \textit{parallel}, provided $\psi\ev{e_1} = \psi\ev{e_2}$.
\item The \textit{out-degree} of a vertex $u\in V\ev{D}$ is defined as $\deg_D^{+}\ev{u} = |\{ e\in E\ev{D}: \psi\ev{e} = \ev{u,v} \text{ for some } v\in V\ev{D} \} | $.
\item Given an undirected multigraph $X=(V(X),E(X),\phi)$ and an digraph $O=(V(O),E(O),\psi)$, then $O$ is an \textit{orientation} of $X$, provided $V\ev{O} = V\ev{X}$, $E\ev{O} = E\ev{X}$ and we have $\phi\ev{ e } = \theta\ev{ \psi\ev{ e } }$ for each $e\in E\ev{X} = E\ev{O}$, where 
\begin{align*}
\theta: V\ev{X} \times V\ev{X} &\rightarrow \binom{V\ev{X}}{2} \\ 
\ev{ v_1, v_2 } & \mapsto \{ v_1, v_2\}
\end{align*}
is the ``forgetful'' mapping. The set of orientations of $X$ is denoted as $\mathcal{O}\ev{X}$. So, we have $|\mathcal{O}\ev{X}| = 2^{|E\ev{X}|}$. 
\end{enumerate}
\end{definition}

We provide some preliminary definitions for Eulerian multigraphs. Let $X$ be a multigraph. A \textit{walk} of $X$ is an alternating sequence \[\mathbf{w}=\ev{v_0,e_1,v_1,e_2,\ldots, v_{d-1}, e_{d}, v_d}\] of vertices $\ev{ v_i }_{i=0}^{d}$ and edges $\ev{ e_i }_{i=1}^{d}$, such that consecutive pairs of elements are incident. The number $d$ is the \textit{length} of $\mathbf{w}$. The vertices $v_0,v_d$ are the \textit{initial vertex} and the \textit{terminal vertex} of $\mathbf{w}$, respectively. The walk $\mathbf{w}$ is \textit{closed}, if $v_0 = v_d$ (In this case, $\mathbf{w}$ is a closed walk at $v_0$.). The walk $\mathbf{w}$ is a \textit{trail}, if its edges are pairwise distinct. A closed trail $\mathbf{w}=\ev{v_0,e_1,v_1,e_2,\ldots, v_{d-1}, e_{d}, v_d}$ is \textit{simple}, provided $v_i \neq v_j$ for each $\ev{i,j}$ such that $0\leq i < j \leq d $ and $\ev{i,j} \neq \ev{0,d}$. A closed trail $\mathbf{w}$ of $X$ is \textit{Eulerian}, if it traverses every edge of $X$. We denote by $\mathcal{T}\ev{X}$ and $\mathcal{W}\ev{X}$, the sets of closed trails and Eulerian (closed) trails of $X$, respectively. We define the set of \textit{circuits} of $X$ as the quotient $\mathcal{Z}\ev{X} := \mathcal{T}\ev{X} /\rho$ of closed trails under the equivalence relation $\rho$ of cyclic permutation of edges. More formally, $\rho$ is the transitive closure of the relations:
$$ \mathbf{w}_1 = \ev{ v_0, e_1, v_1, e_2,\ldots, e_{d-1}, v_{d-1}, e_{d}, v_0 } \ \ \rho \ \  \mathbf{w}_2 = \ev{ v_{d-1}, e_{d}, v_0, e_1, v_1,e_2,\ldots, e_{d-1} ,v_{d-1} } $$

The set of \textit{Eulerian circuits} is defined as $\mathfrak{C}\ev{ X } := \mathcal{W}\ev{ X } / \rho$, the quotient of the set of Eulerian trails of $X$, up to the equivalence relation of cyclic permutation of edges. The multigraph $X$ is \textit{Eulerian}, if $\mathfrak{C}\ev{X}\neq \emptyset$. By the definition of an Eulerian circuit, Eulerian multigraphs are necessarily connected. 

Whenever a circuit $[w]_\rho \in \mathcal{Z}\ev{ X } $ has no repeated vertices, it is called a \textit{cycle}. Let $\mathcal{B}\ev{X}$ be the set of cycles of $X$. A cycle of length $d=2$ is a \textit{digon}. If $X$ has two parallel edges $e_1,e_2$, then it follows from the definition of a cycle that there is a unique digon with edge set $\{e_1,e_2\}$. On the other hand, for each set $A:=\{ e_1,\ldots,e_m\}$ of pairwise non-parallel edges with $m\geq 3$, there are exactly $2$ cycles of $X$, of length $m$, with edge set $A$.

A walk of a digraph is defined similarly: A walk $\mathbf{w}=\ev{v_0,e_1,v_1,e_2,\ldots, v_{d-1}, e_{d}, v_d}$ must satisfy $\psi\ev{ e_i } = \ev{ v_{i-1}, v_{i} }$, for each $i = 1,\ldots, d$. We use the same notation, $\mathcal{B}\ev{D}$, for the set of cycles of a digraph $D$. Given a closed trail $\mathbf{w} = \ev{ v_0, e_1,v_1,\ldots, v_{m-1},e_{m-1},v_m} $ and two indices $i \leq j $, then we define the subtrail,
$$
v_i \mathbf{w} v_j = \ev{ v_i, e_i, v_{i+1},\ldots, v_{j-1},e_{j-1},v_j}
$$

\begin{definition}
\label{def:first_simple_closed_trail}
Given a closed trail $\mathbf{w}=\ev{v_0,e_1,v_1,e_2,\ldots, v_{d-1}, e_{d}, v_d}$, then let $j$ be the minimal index such that there exists some $i < j$ with $v_i = v_j$. Then, the subtrail $\mathbf{w}_1 := v_i \mathbf{w} v_j$ is a simple closed trail, called the \textit{first simple closed subtrail} of $\mathbf{w}$. In particular, $\beta := [\mathbf{w}_1]_{\rho}$ is a cycle.
\end{definition}

Given an edge $e\in E\ev{D}$ such that $\psi\ev{e} = \ev{z_1,z_2}$, then we define the set of Eulerian (closed) trails of $D$ ending at $e$:
\begin{align*}
\mathcal{W}^{e}\ev{D} &= \{ \mathbf{w} = \ev{ v_0, e_1, v_1,\ldots, v_{ d-2 }, e_{ d-1 }, z_1, e, z_2 } : \mathbf{w} \in \mathcal{W}\ev{D} \}
\end{align*}

Fixing an edge $e\in D$, note that Eulerian circuits of $D$ are clearly in bijection with the Eulerian trails of $D$, ending at $e$, and so $|\mathfrak{C}\ev{D}| = |\mathcal{W}^{e}\ev{D}|$. 

For a vertex $u\in V\ev{D}$, let $\tau^{u}\ev{D}$ be the set of (spanning) arborescences of $D$, rooted at $u$ (see \cite[p.~34]{schrijver} for the definition of an arborescence). By the B.E.S.T. Theorem (\cite[Theorem 6, p.~213]{best}), the number $|\tau^{u}\ev{D}|$ is constant over the variable $u\in V\ev{D}$, so we may define $\tau_D := \tau^{u}\ev{D}$, for any $u\in V\ev{D}$. Furthermore, the B.E.S.T. Theorem connects the number of Eulerian circuits of a digraph to the number of arborescences:
\begin{theorem}[B.E.S.T. Theorem]
\label{thm:best}
$$
|\mathfrak{C}\ev{D}| = \tau_D \prod_{v\in V\ev{D}} (\deg_D^{+}\ev{v}-1)!
$$
\end{theorem}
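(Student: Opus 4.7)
The plan is to give the classical bijective proof of B.E.S.T., setting up an explicit correspondence between Eulerian circuits of $D$ and pairs consisting of a rooted arborescence together with a family of orderings of the out-edges at each vertex. First, I would fix a vertex $u \in V\ev{D}$ and an out-edge $f$ at $u$ with $\psi\ev{f} = \ev{u,w}$. The same cyclic-rotation principle that the excerpt uses for trails ending with a fixed edge, applied now to the \emph{first} edge, shows that the set $\mathcal{W}_f\ev{D}$ of Eulerian trails beginning with $f$ satisfies $|\mathcal{W}_f\ev{D}| = |\mathfrak{C}\ev{D}|$, so it would suffice to enumerate $\mathcal{W}_f\ev{D}$.

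Next, I would define a forward map $\Phi$ sending each $\mathbf{w} \in \mathcal{W}_f\ev{D}$ to a pair $(T, \sigma)$, where $T = T\ev{\mathbf{w}}$ is the subdigraph whose edges are, for each $v \neq u$, the last out-edge of $v$ used by $\mathbf{w}$, and $\sigma_v = \sigma_v\ev{\mathbf{w}}$ is the list of out-edges at $v$ in order of use. The first nontrivial step is to verify that $T$ is an arborescence rooted at $u$: by construction $|T| = |V\ev{D}| - 1$ and each $v \neq u$ has out-degree one in $T$, so it remains to rule out a directed cycle (which automatically avoids $u$, since $u$ contributes no out-edge to $T$). A \emph{last-visit} argument handles this: if $v_1 \to v_2 \to \cdots \to v_k \to v_1$ were such a cycle with $t_i$ the position of the last out-edge at $v_i$, then because $\deg_D^{+}\ev{v_{i+1}} = \deg_D^{-}\ev{v_{i+1}}$ and $v_{i+1} \neq u$ is not the terminal vertex of $\mathbf{w}$, the arrival at $v_{i+1}$ at position $t_i + 1$ would have to be followed by a later departure from $v_{i+1}$, giving $t_{i+1} > t_i$; applied cyclically, this is a contradiction.

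The inverse map would send a pair $(T, \sigma)$---with $\sigma_u$ beginning with $f$ and $\sigma_v$ ending with the $T$-edge at $v$ for each $v \neq u$---to the greedy trail $\mathbf{w}$ that starts at $u$ and, at each visit to a vertex $v$, uses the next un-traversed out-edge in $\sigma_v$. The main obstacle will be proving that this greedy reconstruction actually consumes every edge of $D$. The walk cannot stall at any $v \neq u$, because in-/out-degree balance at $v$ would force a prior exhaustion of in-edges as well; hence it terminates at $u$. If some edge $(w, w')$ remained unused at termination, then the $T$-edge at $w$---being last in $\sigma_w$---would itself be unused, and iterating this observation along the $T$-path from $w$ up to $u$ would produce an unused in-edge at $u$, contradicting the fact that all $\deg_D^{+}\ev{u} = \deg_D^{-}\ev{u}$ in- and out-edges at $u$ have already been consumed.

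Once $\Phi$ is shown to be a bijection (injectivity is immediate from $\sigma\ev{\mathbf{w}}$, and surjectivity is exactly the step above), counting the pairs on the right-hand side yields
\[
|\mathcal{W}_f\ev{D}| \;=\; \tau_D \cdot \ev{\deg_D^{+}\ev{u} - 1}! \cdot \prod_{v \neq u}\ev{\deg_D^{+}\ev{v} - 1}! \;=\; \tau_D \prod_{v \in V\ev{D}}\ev{\deg_D^{+}\ev{v} - 1}!,
\]
as claimed, where the factor $\ev{\deg_D^{+}\ev{u}-1}!$ counts orderings of $\sigma_u$ with $f$ fixed in the first position.
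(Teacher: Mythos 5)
The paper does not prove this statement at all: it quotes the B.E.S.T.\ Theorem from van Aardenne-Ehrenfest and de Bruijn (\cite[Theorem 6, p.~213]{best}) and uses it as a black box, so there is no internal proof to compare against. Your argument is the standard bijective proof (trails starting with a fixed edge $f$ at $u$ $\leftrightarrow$ pairs consisting of a ``last-exit'' spanning tree and orderings of the out-edges with the tree edge last), and the three key steps you isolate --- acyclicity of the last-exit tree via the increasing-last-exit-time argument, non-stalling of the greedy reconstruction at $v\neq u$ via degree balance, and the propagation of an unused edge along the tree path to produce an unused in-edge at $u$ --- are exactly the right ones and are argued correctly; together with the observation $|\mathcal{W}_f\ev{D}|=|\mathfrak{C}\ev{D}|$ (the mirror image of the paper's $|\mathcal{W}^{e}\ev{D}|=|\mathfrak{C}\ev{D}|$) this gives the formula. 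One wrinkle you should smooth out: your $T$ has each $v\neq u$ of out-degree one, i.e.\ it is an arborescence oriented \emph{toward} $u$, whereas the arborescences $\tau^{u}\ev{D}$ the paper works with (see its \Cref{rmk:edge_contract_arb_num} and the step \textit{(7)}$\Longrightarrow$\textit{(5)}, where each non-root vertex has a unique \emph{entering} tree edge) are oriented \emph{away} from the root. For an Eulerian digraph the two counts coincide --- either apply your bijection to the digraph with all arcs reversed, which preserves Eulerian circuits and out-degree sequence, or record first-entrance edges instead of last-exit edges --- but a one-line remark to this effect is needed for your count $\tau_D$ to mean the same thing as in the paper's statement.
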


\begin{definition}
\label{def:edge_contract}
Let $D = \ev{ V\ev{D}, E\ev{D}, \psi}$ be an Eulerian digraph and $e\in E\ev{D}$ be a fixed edge, with $\psi\ev{e} = \ev{u,v}$. Then, we obtain an Eulerian digraph $D'$, by contracting $e$ and replacing $u$ and $v$ with a new vertex $u'$ (c.f. \cite[p.~194]{gross}, for the definition of contraction). Edges incident to both $u$ and $v$ are deleted, instead of making loops.
\end{definition}
\begin{remark}
\label{rmk:edge_contract_arb_num}
Given an Eulerian digraph $D$ and an edge $e\in E\ev{D}$ with initial vertex $u$, then there is a natural one-to-one correspondence:
$$
\tau^{u}_e\ev{D} \leftrightarrow \tau^{u'}\ev{D'}
$$
where $D'$ is the digraph obtained by contracting $e$, as in \Cref{def:edge_contract} and $\tau^{u}_e\ev{D}$ is the set of arborescences of $D$, rooted at $u$ and containing $e$. 
\end{remark}

\subsection{Eulerian digraphs with a unique partition into cycles}
By Veblen's Theorem (\cite[p.~87]{veblen}), an Eulerian multi-digraph $D$ has at least one partition into cycles. With a series of lemmas, we shall characterize connected Eulerian digraphs $D$ with a unique partition into cycles. 

\begin{remark}
For two digraphs $D_1,D_2$, if $|V\ev{D_1}\cap V\ev{D_2}|\leq 1$ then $|E\ev{D_1}\cap E\ev{D_2}| = 0$, where the converse is not necessarily true. 
\end{remark}
\begin{definition}
Let $D_1$ and $D_2$ be two digraphs. The digraph $D_1 \ast D_2$ is defined as the union of $D_1$ and $D_2$, provided $| V\ev{D_1}\cap V\ev{D_2} | = 1$. 
\end{definition}
The digraph $D_1 \ast D_2$ is undefined, unless the intersection of $D_1$ and $D_2$ is a single vertex. Whenever defined, the binary operation $\ast$ is commutative and associative. We may omit parentheses when $\ast $ is applied more than once. 

\begin{definition}
\label{def:bridgeless_cactus}
Let $\mathcal{L}$ denote the set/collection of all finite cycles. Let $\mathcal{S}$ be the collection of digraphs, obtained by taking the closure of $\mathcal{L}$ under the operation $\ast$. 
\end{definition}

\begin{figure}
\centering
\includegraphics[width=0.65\linewidth]{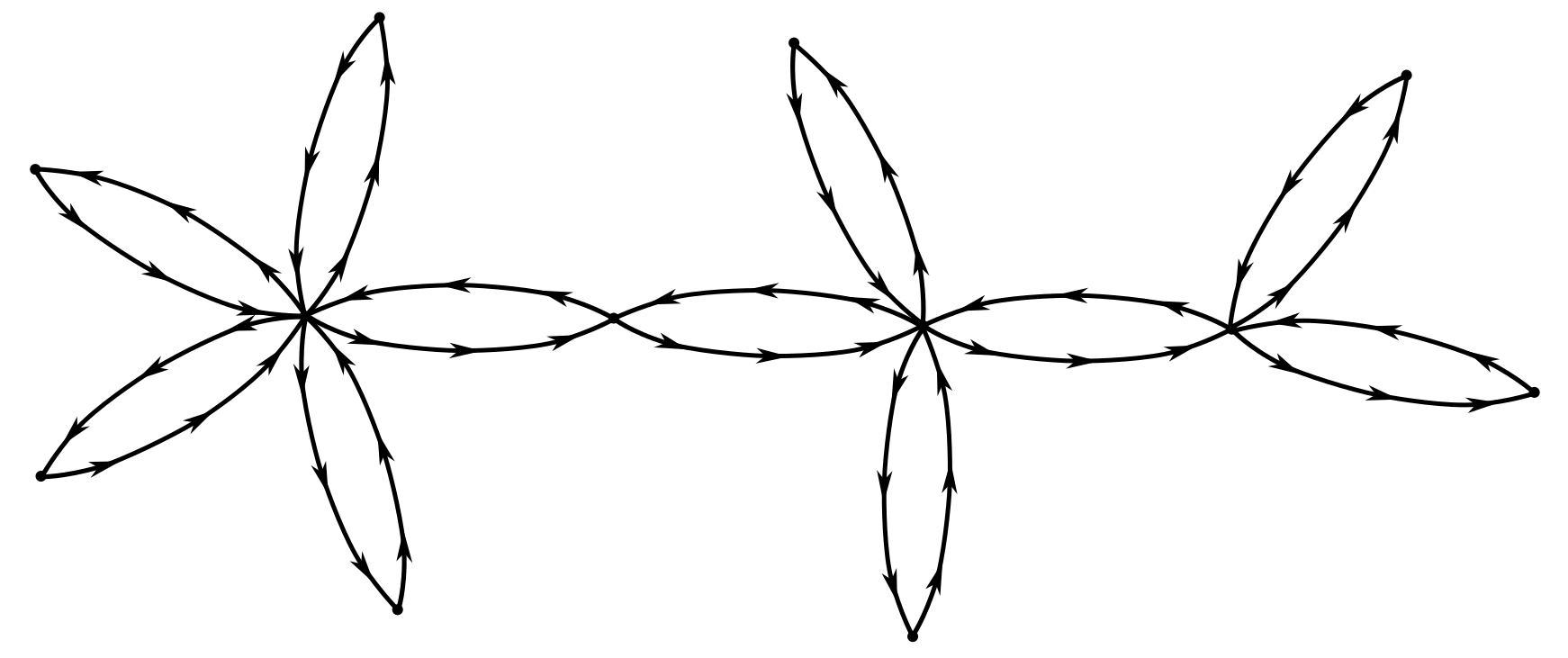}
\caption{An element of the collection $\mathcal{S}$.}
\label{fig:collectiontreelike}
\end{figure}

\begin{remark}
\label{rmk:basic_family}

\phantom{a}

\begin{enumerate}
\item An element of the collection $\mathcal{S}$ is similar to a tree (q.v. \Cref{fig:collectiontreelike}.) In fact, it is shown in \Cref{thm:unique_decomp_equiv_digraph} below that any $D\in \mathcal{S}$ has a unique arborescence rooted at any vertex. 
\item Given a digraph $D = \beta_1 \ast \ldots \ast \beta_t \in \mathcal{S}$, then it follows that $\beta_1\ast \ldots \ast \beta_i \in \mathcal{S}$, for each $i=1,\ldots,t$.
\item It is easy to see that $D_1 \ast D_2 \in \mathcal{S}$, for any two digraphs $D_1,D_2\in \mathcal{S}$ with $| V\ev{D_1}\cap V\ev{D_2} | = 1$.
\end{enumerate}
\end{remark}
\begin{definition}
Given a digraph $D$, a set $\mathcal{A} = \{ \gamma_1,\ldots, \gamma_t \} \subseteq \mathcal{B}\ev{D} $ is a \textit{partition of $D$ into cycles}, provided
$$
E\ev{D} = \bigsqcup_{i=1}^{t} E\ev{\gamma_i}
$$
is a disjoint union.
\end{definition}

\begin{remark}
\label{rmk:ast_directed_cycles}
Let $D_1$ and $D_2$ be two digraphs with $V\ev{ D_1 } \cap V\ev{ D_2 } = \{u\}$. Let $\beta \in \mathcal{B}\ev{ D_1 \ast D_2}$ be a cycle. Since $\beta$ visits the vertex $u$ at most once, it follows that $E\ev{ \beta }\subseteq E\ev{ D_1 }$ or $ E\ev{ \beta }\subseteq E\ev{ D_2 }$. Therefore, the set of cycles of $D_1\ast D_2$ is given by the disjoint union of the sets of cycles of $D_1$ and $D_2$:
$$ \mathcal{B}\ev{ D_1 \ast D_2 } = \mathcal{B}\ev{ D_1 } \sqcup \mathcal{B}\ev{ D_2 } $$
\end{remark}

\begin{lemma}
\label{lem:decomp_basic}
Let $D$ be an Eulerian digraph. Let $D_1$ and $D_2$ be Eulerian subdigraphs of $D$ such that $E\ev{D} = E\ev{D_1} \sqcup E\ev{D_2}$ is a disjoint union. Then,
\begin{enumerate}
\item[i)] If $\mathcal{A}_i$ is a partition of $D_i$ into cycles, for $i=1,2$, then $\mathcal{A}_1 \sqcup \mathcal{A}_2$ is a partition of $D$ into cycles. 
\item[ii)] If $D$ has a unique partition into cycles, then $D_i$ has a unique partition into cycles, for $i=1,2$. 
\end{enumerate}
\end{lemma}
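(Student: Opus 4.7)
The plan is to prove both parts directly, using the fact that a cycle of a subdigraph is a cycle of the ambient digraph, together with Veblen's theorem to supply auxiliary partitions when needed.

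For part (i), I would first observe that since $D_1$ and $D_2$ are subdigraphs of $D$, any cycle of $D_i$ is also a cycle of $D$, so $\mathcal{A}_1 \cup \mathcal{A}_2 \subseteq \mathcal{B}(D)$. The hypothesis $E(D) = E(D_1) \sqcup E(D_2)$ combined with $E(D_i) = \bigsqcup_{\gamma \in \mathcal{A}_i} E(\gamma)$ then gives
\[
E(D) = \Bigl(\bigsqcup_{\gamma \in \mathcal{A}_1} E(\gamma)\Bigr) \sqcup \Bigl(\bigsqcup_{\gamma \in \mathcal{A}_2} E(\gamma)\Bigr),
\]
and the disjointness of edge sets across $\mathcal{A}_1$ and $\mathcal{A}_2$ forces the cycles themselves to be distinct (any cycle has at least one edge), so the union is a genuine disjoint union $\mathcal{A}_1 \sqcup \mathcal{A}_2$, partitioning $E(D)$.

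For part (ii), I would argue by contrapositive: suppose, without loss of generality, that $D_1$ admits two distinct partitions $\mathcal{A}_1$ and $\mathcal{A}_1'$ into cycles. Since $D_2$ is Eulerian, Veblen's theorem (as stated in the introduction, and applicable to digraphs in the form: every vertex having equal in- and out-degree yields a partition into directed cycles) furnishes at least one partition $\mathcal{A}_2$ of $D_2$ into cycles. Applying part (i) to each of $(\mathcal{A}_1, \mathcal{A}_2)$ and $(\mathcal{A}_1', \mathcal{A}_2)$ produces two partitions $\mathcal{A}_1 \sqcup \mathcal{A}_2$ and $\mathcal{A}_1' \sqcup \mathcal{A}_2$ of $D$ into cycles. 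These are distinct because $\mathcal{A}_1 \neq \mathcal{A}_1'$ and the cycles in $\mathcal{A}_1, \mathcal{A}_1'$ (having edges in $E(D_1)$) are disjoint from those in $\mathcal{A}_2$ (having edges in $E(D_2)$), contradicting the uniqueness of the partition of $D$.

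I do not anticipate a genuine obstacle; the only subtlety worth flagging explicitly is the need to invoke Veblen's theorem on $D_2$ (respectively $D_1$) to guarantee that the "other half" of the decomposition exists so that a disagreement in one half truly lifts to a disagreement in $D$. The statement is essentially a bookkeeping consequence of the disjoint-edge-set hypothesis and the fact that cycles are preserved under passage to subdigraphs.
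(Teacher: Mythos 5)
Your argument is correct and is essentially the paper's own proof: part (i) is the same bookkeeping with disjoint unions, and part (ii) is the same lifting argument (the paper phrases it directly -- take two partitions of $D_1$, adjoin a fixed partition $\mathcal{C}$ of $D_2$, and invoke uniqueness in $D$ -- while you phrase it contrapositively, which is only a cosmetic difference). Your explicit appeal to Veblen's theorem to guarantee the existence of the partition of $D_2$ is exactly what the paper uses implicitly when it says ``choose any partition $\mathcal{C}$ of $D_2$ into cycles.''
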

\begin{proof}
\textit{i)}
$$ 
E\ev{D} = E\ev{D_1} \sqcup E\ev{D_2} = \ew{  \bigsqcup_{ \gamma \in \mathcal{A}_1 } E\ev{\gamma}  } \sqcup \ew{  \bigsqcup_{ \gamma \in \mathcal{A}_2 } E\ev{\gamma}  } =  \bigsqcup_{ \gamma \in \mathcal{A}_1\sqcup \mathcal{A}_2 } E\ev{\gamma} 
$$

\textit{ii)} Let $\mathcal{A}$ be the unique partition of $D$ into cycles. To show that $D_1$ has a unique partition, let $\mathcal{A}_1^{1}, \mathcal{A}_1^{2}$ be any two partitions of $D_1$ into cycles. Choose any partition $\mathcal{C}$ of $D_2$ into cycles. By Part \textit{i)}, we infer that $\mathcal{A}_1^{1} \sqcup \mathcal{C}$ and $\mathcal{A}_1^{2}\sqcup \mathcal{C}$ are both partitions of $D$ into cycles. By the uniqueness property of $\mathcal{A}$, we obtain $\mathcal{A} = \mathcal{A}_1^{1} \sqcup \mathcal{C} = \mathcal{A}_1^{2}\sqcup \mathcal{C}$. Since $E\ev{D_1}\cap E\ev{D_2} = \emptyset$, it follows that $\mathcal{A}_1^{1} = \mathcal{A}_1^{2}$, which proves the uniqueness of the partition of $D_1$ into cycles. By a similar argument, $D_2$ also has a unique partition.
\end{proof}
\begin{lemma}
\label{lem:ast_decomp}
Let $D_1$ and $D_2$ be Eulerian digraphs, intersecting at a singleton. Then, 
\begin{enumerate}
\item[i)] If $\mathcal{A}$ is a partition of $D = D_1 \ast D_2$ into cycles, then $ \mathcal{A}\cap \mathcal{B}\ev{ D_i }$ is a partition of $D_i$, for each $i=1,2$. 
\item[ii)] If $\mathcal{A}$ is the unique partition of the union $D = D_1 \ast D_2$, then $\mathcal{A}_i = \mathcal{A} \cap  \mathcal{B}\ev{D_i} $ is the unique partition of $D_i$, for each $i=1,2$. 
\item[iii)] If $D_i$ has a unique partition $\mathcal{A}_i$, for $i=1,2$, then $\mathcal{A}_1 \sqcup \mathcal{A}_2$ is the unique partition of the union $D_1 \ast D_2$.
\end{enumerate}
\end{lemma}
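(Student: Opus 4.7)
The plan is to leverage Remark \ref{rmk:ast_directed_cycles}, which tells us that every cycle of $D_1 \ast D_2$ lies entirely in $\mathcal{B}(D_1)$ or entirely in $\mathcal{B}(D_2)$, together with Lemma \ref{lem:decomp_basic} applied to the decomposition $E(D) = E(D_1) \sqcup E(D_2)$ (note that because $D_1$ and $D_2$ share exactly one vertex, they share no edges, so this is a genuine disjoint union and both $D_i$ are Eulerian subdigraphs of $D$).

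For \textit{i)}, I would start with a partition $\mathcal{A}$ of $D$ into cycles and use Remark \ref{rmk:ast_directed_cycles} to write
$$
\mathcal{A} \;=\; \bigl(\mathcal{A}\cap \mathcal{B}(D_1)\bigr) \sqcup \bigl(\mathcal{A}\cap \mathcal{B}(D_2)\bigr).
$$
Taking unions of edge sets, this gives $E(D) = \bigsqcup_{\gamma \in \mathcal{A}\cap \mathcal{B}(D_1)} E(\gamma) \;\sqcup\; \bigsqcup_{\gamma \in \mathcal{A}\cap \mathcal{B}(D_2)} E(\gamma)$. Since the first block consists only of edges of $D_1$ and the second only of edges of $D_2$, and $E(D) = E(D_1)\sqcup E(D_2)$, matching sides shows that $\mathcal{A}\cap \mathcal{B}(D_i)$ partitions $E(D_i)$.

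For \textit{ii)}, I would invoke Lemma \ref{lem:decomp_basic}\textit{ii)} with the decomposition $E(D)=E(D_1)\sqcup E(D_2)$ to conclude that each $D_i$ has a unique partition into cycles; part \textit{i)} then identifies that unique partition as $\mathcal{A}\cap \mathcal{B}(D_i)$. For \textit{iii)}, Lemma \ref{lem:decomp_basic}\textit{i)} shows that $\mathcal{A}_1\sqcup \mathcal{A}_2$ is a partition of $D$ into cycles; for uniqueness, I would take any partition $\mathcal{A}$ of $D$, apply \textit{i)} to obtain partitions $\mathcal{A}\cap \mathcal{B}(D_i)$ of each $D_i$, and then use the hypothesized uniqueness of $\mathcal{A}_i$ to force $\mathcal{A}\cap \mathcal{B}(D_i)=\mathcal{A}_i$, hence $\mathcal{A} = \mathcal{A}_1\sqcup \mathcal{A}_2$.

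No step is a real obstacle here: the only subtlety is the bookkeeping in part \textit{i)} to confirm that the natural partition of $\mathcal{A}$ induced by $\mathcal{B}(D_1)\sqcup \mathcal{B}(D_2)$ really respects the edge-partition $E(D_1)\sqcup E(D_2)$. Once \textit{i)} is in hand, parts \textit{ii)} and \textit{iii)} are short combinations of \textit{i)} with Lemma \ref{lem:decomp_basic}.
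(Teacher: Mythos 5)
Your proposal is correct and follows essentially the same route as the paper: part \textit{i)} via Remark \ref{rmk:ast_directed_cycles} and the edge decomposition $E\ev{D}=E\ev{D_1}\sqcup E\ev{D_2}$, part \textit{ii)} via Lemma \ref{lem:decomp_basic}\textit{ii)} combined with part \textit{i)}, and part \textit{iii)} by applying part \textit{i)} to an arbitrary partition of $D$ and invoking uniqueness of the $\mathcal{A}_i$. The bookkeeping you flag in part \textit{i)} is exactly the intersection-with-$E\ev{D_1}$ computation the paper carries out, so there is no gap.
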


\begin{proof}
\textit{i)} Let $\mathcal{A}$ be any partition of $D = D_1 \ast D_2$ into cycles. Let $\mathcal{A}_i := \mathcal{A}\cap \mathcal{B}\ev{D_i}$. By \Cref{rmk:ast_directed_cycles}, we have 
$$ \mathcal{A} = \mathcal{A}\cap \mathcal{B}\ev{ D } = \mathcal{A}\cap (\mathcal{B}\ev{ D_1 } \sqcup \mathcal{B}\ev{ D_2 }) = ( \mathcal{A}\cap \mathcal{B}\ev{ D_1 }) \sqcup ( \mathcal{A}\cap \mathcal{B}\ev{ D_2 })  = \mathcal{A}_1 \sqcup \mathcal{A}_2 $$
This implies,
\begin{align*}
& E\ev{ D_1 } = E\ev{ D_1 } \cap E\ev{ D } \\ 
&  = E\ev{D_1} \cap \ew{ \bigsqcup_{\gamma \in \mathcal{A} } E\ev{ \gamma }  } && \text{ since $\mathcal{A}$ is a partition of the edges of $D$ into disjoint cycles} \\ 
& = \bigsqcup_{\gamma \in \mathcal{A} } (E\ev{ D_1 } \cap E\ev{ \gamma }) \\ 
& = \bigsqcup_{\gamma \in \mathcal{A}_1 } E\ev{ D_1 } \cap E\ev{ \gamma } && \text{ since $ E\ev{ D_1 } \cap E\ev{\gamma} =\emptyset$ for each $\gamma\in \mathcal{B}\ev{ D_2 }$. }\\ 
& = \bigsqcup_{\gamma \in \mathcal{A}_1 } E\ev{ \gamma }
\end{align*}
which means that $\mathcal{A}_1$ is a partition of $D_1$. Similarly, $\mathcal{A}_2$ is a partition of $D_2$. 

\textit{ii)} By Part \textit{i)}, $\mathcal{A}_i$ is a partition of $D_i$, for $i=1,2$. By \Cref{lem:decomp_basic} Part \textit{ii)}, we obtain that $\mathcal{A}_i$ is the unique partition of $D_i$ into cycles, for $i=1,2$.

\textit{iii)} Let $\mathcal{C}$ be any partition of $D = D_1 \ast D_2$. By Part \textit{i)}, $\mathcal{C}_i = \mathcal{C}\cap \mathcal{B}\ev{D_i}$ is a partition of $D_i$, for each $i=1,2$. By the uniqueness property of $\mathcal{A}_i$, we obtain $\mathcal{A}_i = \mathcal{C}_i$, for $i=1,2$. So, we obtain $\mathcal{C} = \mathcal{C}_1 \sqcup \mathcal{C}_2 = \mathcal{A}_1 \sqcup \mathcal{A}_2$.
\end{proof}

\begin{figure}[t!]
\centering
\includegraphics[width=5in]{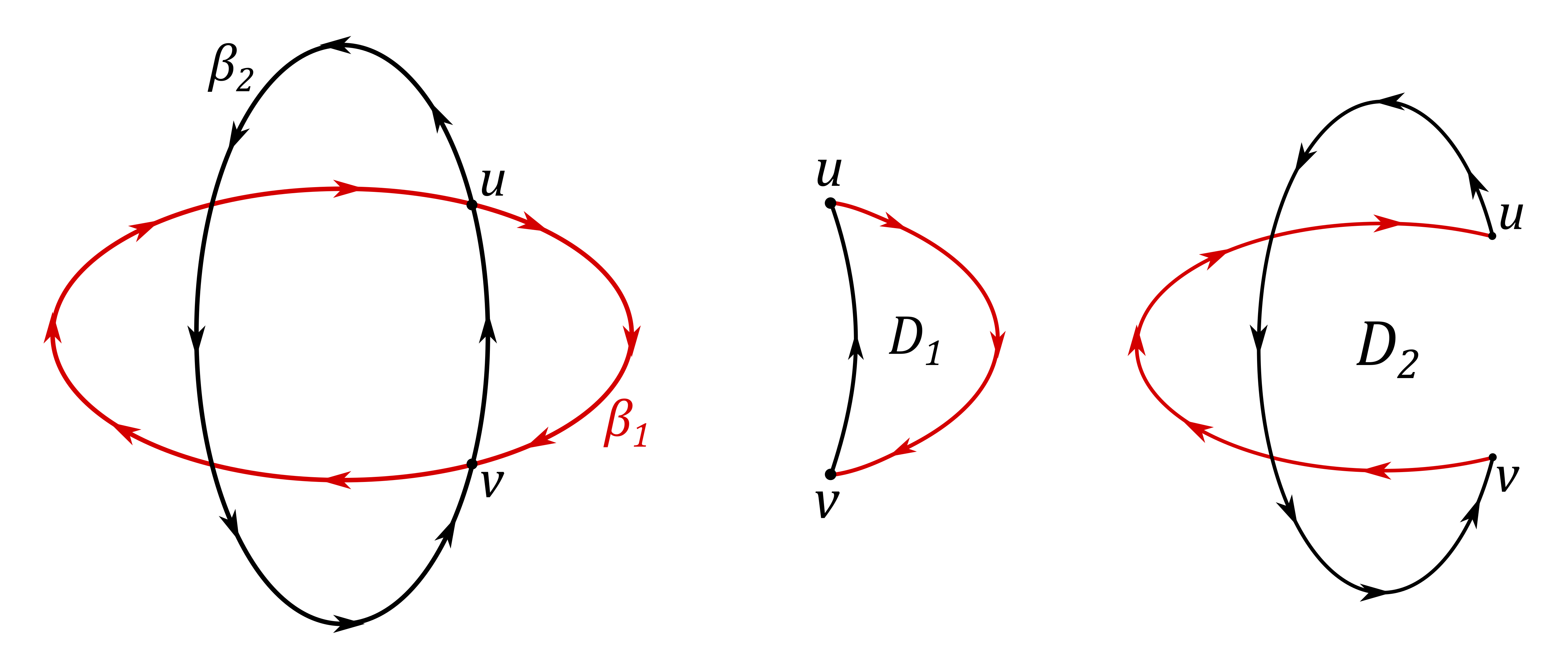} 
\caption{Two cycles $\{\beta_i\}_{i=1,2}$ intersecting at $u\neq v$ and the partitioning digraphs $\{D_i\}_{i=1,2}$}
\label{fig:uniquedecomponevertex}
\end{figure}

\begin{lemma}
\label{lem:max_intersection}
Let $D$ be a connected Eulerian digraph. Assume that $D$ has a unique partition $\mathcal{A} = \{ \beta_1, \ldots, \beta_t \}$ into cycles. Then, $ |V\ev{\beta_i} \cap V\ev{\beta_j} | \leq 1$, for each $1\leq i < j \leq t$. 
\end{lemma}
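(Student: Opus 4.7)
My plan is to argue by contradiction: assume two cycles $\beta_1,\beta_2 \in \mathcal{A}$ meet in at least two vertices, and build a second partition of $D$ into cycles by ``swapping'' portions of $\beta_1$ and $\beta_2$ at those vertices.

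Concretely, suppose $u,v$ are distinct vertices in $V(\beta_1)\cap V(\beta_2)$. Since each $\beta_i$ is a directed cycle visiting both $u$ and $v$, I can decompose it as $\beta_i = P_i \cdot Q_i$, where $P_i$ is the directed $u$-to-$v$ path along $\beta_i$ and $Q_i$ is the directed $v$-to-$u$ path along $\beta_i$. The four edge sets $E(P_1),E(Q_1),E(P_2),E(Q_2)$ are pairwise disjoint: within each $\beta_i$ by construction, and across $\beta_1,\beta_2$ because distinct members of the partition $\mathcal{A}$ are edge-disjoint.

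Now form the concatenated closed trails $\alpha_1 := P_1 \cdot Q_2$ (from $u$ to $v$ along $\beta_1$, then $v$ back to $u$ along $\beta_2$) and $\alpha_2 := P_2 \cdot Q_1$. Each $\alpha_j$ is a valid closed directed trail, so the subdigraph it traces has balanced in/out-degrees and, by Veblen's theorem for digraphs, admits a partition $\mathcal{A}_j'$ into cycles. Since $E(\alpha_1)\sqcup E(\alpha_2) = E(\beta_1)\sqcup E(\beta_2)$, the family
\[
\mathcal{A}' \;:=\; \mathcal{A}_1' \;\sqcup\; \mathcal{A}_2' \;\sqcup\; \bigl(\mathcal{A}\setminus\{\beta_1,\beta_2\}\bigr)
\]
is a partition of $D$ into cycles.

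The key step is to show $\mathcal{A}'\neq \mathcal{A}$, contradicting uniqueness. I claim $\beta_1 \notin \mathcal{A}_1'\cup \mathcal{A}_2'$: indeed $E(\beta_1)=E(P_1)\sqcup E(Q_1)$ is not contained in $E(\alpha_1)=E(P_1)\sqcup E(Q_2)$ (it contains edges of $Q_1$) nor in $E(\alpha_2)=E(P_2)\sqcup E(Q_1)$ (it contains edges of $P_1$), so $\beta_1$ cannot appear as a cycle in a partition of either $\alpha_j$. Hence $\beta_1\in \mathcal{A}\setminus \mathcal{A}'$, so the two partitions differ. This plan should carry through cleanly; the only delicate point is tracking the four subpaths and verifying edge-disjointness to conclude $\alpha_1,\alpha_2$ are honest closed trails on disjoint edge sets, but this is a bookkeeping matter rather than a genuine obstacle.
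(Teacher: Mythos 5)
Your proposal is correct and follows essentially the same route as the paper: the paper also splits $\beta_1,\beta_2$ at the two common vertices $u,v$, forms the two swapped closed trails $u\beta_1 v\beta_2 u$ and $u\beta_2 v\beta_1 u$, partitions each into cycles via Veblen, and recombines with $\beta_3,\ldots,\beta_t$ to get a second partition. Your distinctness argument (that $\beta_1$ cannot occur in either new partition since $E(\beta_1)$ lies in neither trail's edge set) is a slightly more direct version of the paper's claim that every cycle of the new partitions meets both $E(\beta_1)$ and $E(\beta_2)$, but the idea is the same.
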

\begin{proof}
Suppose, for a contradiction, that there are some cycles in the partition of $D$, intersecting at more than one vertex. Relabeling if necessary, we may assume $ | V\ev{\beta_1} \cap V\ev{\beta_2} | \geq 2 $, so there are some $ u,v \in V\ev{\beta_1} \cap V\ev{\beta_2}$ such that $u \neq v$. Consider the digraphs (q.v. \Cref{fig:uniquedecomponevertex}), 
$$
D_1 :=  u \beta_1 v \beta_2 u  \text{ and } D_2 := u \beta_2 v \beta_1 u
$$ 
Note that $E\ev{ D_1} \cap E\ev{ D_2 } =\emptyset $ and $ E\ev{ D_1 } \sqcup E\ev{ D_2 } = E\ev{ \beta_1 } \sqcup E\ev{ \beta_2 }$. Let $\mathcal{C}_1 = \{ \gamma_1, \ldots, \gamma_m \}$ and $\mathcal{C}_2 = \{ \delta_1, \ldots, \delta_r \}$ (for $m,r\geq 1$) be any partition of $D_1$ and $D_2$ into cycles, respectively. We claim that,
\begin{equation}
\label{eqn:first_eq}
E\ev{ \epsilon } \cap E\ev{ \beta_1 } \neq \emptyset \text{ and } E\ev{ \epsilon } \cap E\ev{ \beta_2 } \neq \emptyset  \text{ for any $\epsilon \in \mathcal{C}_1\sqcup \mathcal{C}_2$ }
\end{equation}
To show this property, take any cycle $ \epsilon \in \mathcal{C}_1 $. We have $E\ev{ \epsilon }\subseteq E\ev{ D_1} \subseteq E\ev{ \beta_1 } \sqcup E\ev{ \beta_2 }$. Now, assume, for a contradiction, that $E\ev{ \epsilon }\subseteq E\ev{\beta_1}$. Since $\beta_1$ is a cycle, a proper subset of $E\ev{ \beta_1 }$ can not define a cycle. This implies that $\epsilon = \beta_1$. However, the path $ v \beta_1 u $ of $\beta_1$ is disjoint from $D_1$, which is a contradiction. Therefore, $E\ev{ \epsilon } \not \subseteq E\ev{ \beta_1 }$, which implies $E\ev{ \epsilon }\cap E\ev{ \beta_2 } \neq \emptyset$. By a similar argument, we obtain $E\ev{ \epsilon }\cap E\ev{ \beta_1 } \neq \emptyset$. 

By a similar argument, the property (\ref{eqn:first_eq}) holds for the cycles in $\mathcal{C}_2$, as well. Hence, we infer that the cycles of $\mathcal{C}_1\sqcup \mathcal{C}_2$ are distinct from $\beta_1 $ and $\beta_2$. Clearly, we also have $(\mathcal{C}_1 \sqcup \mathcal{C}_2) \cap \{\beta_3,\ldots,\beta_t\} = \emptyset$. It follows that the collection 
$$\mathcal{C}:= \mathcal{C}_1 \sqcup \mathcal{C}_2 \sqcup \{\beta_3, \ldots, \beta_t  \} = \{\gamma_1, \ldots, \gamma_m, \delta_1, \ldots, \delta_r, \beta_3, \ldots, \beta_t  \}$$ 
is a partition of $D$, distinct from $\mathcal{A}$, which is a contradiction. 
\end{proof}

\begin{lemma}
Let $D$ be a connected Eulerian digraph. Assume that $\mathcal{A} = \{ \beta_1, \ldots, \beta_t \}$ ($t\geq 2$) is the unique partition of $D$ into cycles. Assume that the cycles $\beta_1,\ldots,\beta_t$ intersect pairwise, i.e.,
$$ V\ev{ \beta_i } \cap V\ev{ \beta_j } \neq \emptyset$$
for each $1\leq i < j \leq t$. Then, $V\ev{ \beta_1 } \cap \ldots \cap V\ev{ \beta_t } $ is a singleton. In particular, $\{V\ev{ \beta_i } \}_{i=1}^{t}$ has the Helly property (\cite[p.~387, p.~393]{helly}).
\end{lemma}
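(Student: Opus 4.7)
The plan is to argue by contradiction. I will first use the previous lemma to control pairwise intersections, then show that absence of a common vertex forces three cycles whose pairwise intersection vertices are all distinct, and finally exhibit an alternate partition to contradict uniqueness.

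To begin, the preceding lemma gives $|V\ev{\beta_i} \cap V\ev{\beta_j}| \leq 1$, and the pairwise intersection hypothesis upgrades this to $|V\ev{\beta_i} \cap V\ev{\beta_j}| = 1$; denote the unique common vertex by $p_{ij}$. Already this forces $|\bigcap_{i} V\ev{\beta_i}| \leq 1$, so it remains only to prove the intersection is nonempty. The case $t=2$ is immediate; assume $t \geq 3$ and, for contradiction, that $\bigcap_{i=1}^{t} V\ev{\beta_i} = \emptyset$.

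Next I claim there exist indices $i,j,k$ with $p_{ij}, p_{ik}, p_{jk}$ pairwise distinct. Indeed, if for some triple two of the three pairwise-intersection vertices agree, say $p_{ij} = p_{ik} = v$, then $v \in V\ev{\beta_j} \cap V\ev{\beta_k} = \{p_{jk}\}$, so in fact all three coincide. If the claim failed, then for every triple all three pairwise-intersection vertices would agree. Fixing $v_0 := p_{12}$, the triple $\{1,2,k\}$ yields $p_{1k} = p_{2k} = v_0$ for every $k \geq 3$, and then the triple $\{1,k,l\}$ yields $p_{kl} = v_0$ for every $k, l \geq 3$. Hence every $p_{ij}$ equals $v_0$, placing $v_0 \in \bigcap_i V\ev{\beta_i}$, contrary to hypothesis.

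Fix such a triple and set $a := p_{ij}$, $b := p_{ik}$, $c := p_{jk}$, and $H := \beta_i \cup \beta_j \cup \beta_k$. Since $V\ev{\beta_i} \cap V\ev{\beta_j} = \{a\}$ and so on, the directed cycle $\beta_i$ decomposes into two internally vertex-disjoint directed arcs $\alpha_{ab}$ (from $a$ to $b$) and $\alpha_{ba}$ (from $b$ to $a$); analogously $\beta_k$ splits into $\alpha_{bc}, \alpha_{cb}$ and $\beta_j$ into $\alpha_{ca}, \alpha_{ac}$. The internal vertex sets of $\alpha_{ab}, \alpha_{bc}, \alpha_{ca}$ are pairwise disjoint by the singleton-intersection property, so the concatenation $\gamma_1 := \alpha_{ab}\,\alpha_{bc}\,\alpha_{ca}$ is a simple directed cycle, and symmetrically $\gamma_2 := \alpha_{ac}\,\alpha_{cb}\,\alpha_{ba}$ is a simple directed cycle; together $\{\gamma_1, \gamma_2\}$ partitions $E\ev{H}$. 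Consequently $\{\gamma_1, \gamma_2\} \sqcup \{\beta_l : l \notin \{i,j,k\}\}$ is a partition of $D$ into cycles, distinct from $\mathcal{A}$ because each $\gamma_r$ uses edges from all three of $\beta_i, \beta_j, \beta_k$ and so equals no $\beta_l$. This contradicts uniqueness, and the ``In particular'' clause is then immediate: pairwise intersection of $\{V\ev{\beta_i}\}$ forces nonempty global intersection, which is exactly the Helly property. The main subtlety is the verification that $\gamma_1, \gamma_2$ are genuine simple cycles rather than merely closed trails, and for this the singleton-intersection property supplied by the previous lemma is essential.
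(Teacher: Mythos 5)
Your proof is correct and rests on the same key construction as the paper's: given three cycles of the partition whose three pairwise intersection vertices $a,b,c$ are distinct, re-splice their six arcs into two cycles $\gamma_1,\gamma_2$, producing a second partition of $D$ and contradicting uniqueness (your $a,b,c$ play exactly the role of the paper's $u,v_1,v_2$, and both arguments lean on the preceding lemma to make all pairwise intersections singletons, which is what guarantees $\gamma_1,\gamma_2$ are genuine cycles). The only difference is organizational: the paper reaches such a triple by induction on $t$, assuming the common vertex of $\beta_1,\ldots,\beta_{t-1}$ misses $\beta_t$, whereas you locate one directly via the observation that if two of the three pairwise intersection points of a triple coincide then all three do, so an empty global intersection forces a triple with three distinct intersection points.
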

\begin{proof}
By induction on $t\geq 2$. For the basis step, assume $t=2$. We have $V\ev{ \beta_1 } \cap V\ev{ \beta_2 } \neq \emptyset$, by hypothesis. Then, the result follows from \Cref{lem:max_intersection}. For the inductive step, let $\beta_1,\ldots,\beta_t$ be pairwise intersecting cycles, with $t\geq 3$. Since $\beta_1,\ldots,\beta_{t-1}$ intersect pairwise, the inductive hypothesis applies and so, we have $V\ev{ \beta_1 } \cap \ldots  \cap V\ev{ \beta_{t-1} } = \{u\}$, for some $u\in V\ev{D}$. Suppose, for a contradiction, that $u\notin V\ev{\beta_t}$. By hypothesis and \Cref{lem:max_intersection}, we have $ V\ev{ \beta_t } \cap V\ev{ \beta_i } = \{ v_i \}$, for $i=1,2$, where $v_i \neq u$. Consider the directed graphs (q.v. \Cref{fig:uniquedecomppairwiseintersecting}):
$$ \gamma_1 = u \beta_1 v_1 \beta_t v_2 \beta_2 u \text{ and } \gamma_2 = u \beta_2 v_2 \beta_t v_1 \beta_1 u  $$
Since the pairwise intersections of $\beta_1,\beta_2,\beta_t$ are singletons, it follows that $\gamma_{1}$ and $\gamma_2$ are cycles. Note that the edge set of $\beta_1$ is a disjoint union of two paths: $E\ev{\beta_1} = E\ev{ u\beta_1 v_1 } \sqcup E\ev{ v_1 \beta_1 u} $. The same applies to $\beta_2$ and $\beta_t$ and so, $E\ev{\gamma_1} \sqcup E\ev{\gamma_2} = E\ev{\beta_1} \sqcup E\ev{\beta_2} \sqcup E\ev{\beta_t}$. Therefore, the collection 
$$
\mathcal{C}:= \{ \gamma_1,\gamma_2, \beta_3, \ldots, \beta_{t-1}\}
$$ 
is a partition of $D$, distinct from $\mathcal{A}$, which is a contradiction. So, we must have $u\in V\ev{\beta_t}$, which implies, by \Cref{lem:max_intersection}, that $V\ev{\beta_t} \cap V\ev{ \beta_i } = \{u\}$ for each $i=1,\ldots,t-1$ and the result follows. 
\end{proof}

\begin{figure}[t!]
\centering
\includegraphics[width=5in]{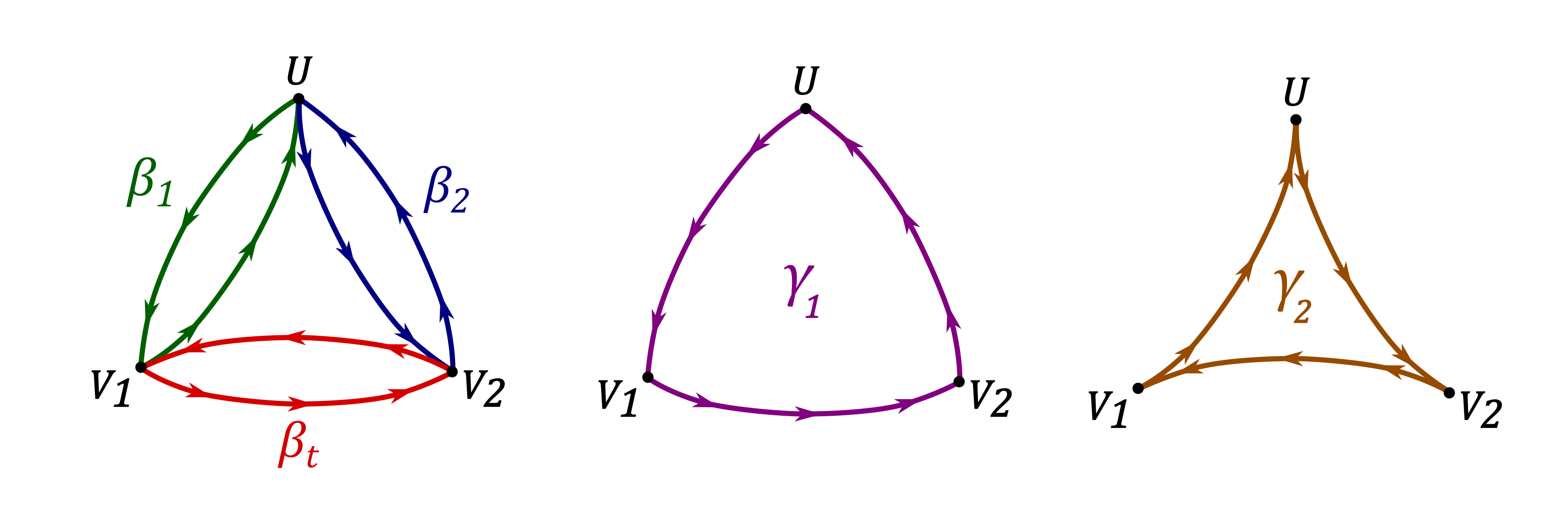} 
\caption{Pairwise intersecting cycles $\{\beta_1,\beta_2,\beta_t\}$ and the alternative partition}
\label{fig:uniquedecomppairwiseintersecting}
\end{figure}

\begin{lemma}
\label{lem:minimal_seq}
Given a digraph $D\in \mathcal{S}$, then, for each distinct $u,v \in V\ev{D}$, there is a sequence of cycles $\gamma_1,\ldots,\gamma_r$ of $D$ such that:
\begin{enumerate}
\item $V\ev{ \gamma_j } \cap V\ev{ \gamma_{j+1} } = \{ x_j \}$ is a singleton, for $j=1,\ldots,r-1$.
\item The vertices in $\{x_j\}_{j=1}^{r-1} \cup \{ u,v \} $ are pairwise distinct. 
\end{enumerate}
\end{lemma}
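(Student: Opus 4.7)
The plan is to induct on the length $t$ of a decomposition $D = \beta_1 \ast \beta_2 \ast \cdots \ast \beta_t$ witnessing $D \in \mathcal{S}$ (which exists by \Cref{def:bridgeless_cactus}). For the induction to carry through it is natural to strengthen the conclusion so that the sequence actually joins $u$ to $v$, namely $u \in V(\gamma_1)$ and $v \in V(\gamma_r)$; this is the implicit ``connecting'' content that makes the lemma non-trivial, and it is the invariant propagated by the induction. The base case $t = 1$ is immediate: $D$ is a single cycle, and taking $r = 1$, $\gamma_1 = D$ puts both $u, v$ into $V(\gamma_1)$ while condition 2 reduces to the given $u \neq v$.

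For the inductive step, write $D = D' \ast \beta_t$ with $D' := \beta_1 \ast \cdots \ast \beta_{t-1} \in \mathcal{S}$ (legitimate by \Cref{rmk:basic_family} part 2), and let $\{w\} = V(D') \cap V(\beta_t)$ be the unique gluing vertex. Split on where $u,v$ lie: (A) both $u, v \in V(D')$; (B) both $u, v \in V(\beta_t)$; (C) exactly one of $u, v$ lies in $V(D') \setminus V(\beta_t)$ and the other in $V(\beta_t) \setminus V(D')$. Since $u \neq v$, these exhaust all possibilities. In case (A), apply the induction hypothesis to $D'$ with endpoints $u, v$ and lift the resulting sequence to $D$. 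In case (B), take $r = 1$ and $\gamma_1 = \beta_t$.

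The substantive case is (C). By symmetry assume $u \in V(D') \setminus V(\beta_t)$ and $v \in V(\beta_t) \setminus V(D')$; in particular $u \neq w$ and $v \neq w$. Apply the inductive hypothesis to $D'$ with endpoints $u$ and $w$ to obtain cycles $\gamma_1, \ldots, \gamma_r$ in $D'$ with $u \in V(\gamma_1)$, $w \in V(\gamma_r)$, singleton consecutive intersections $\{x_j\}_{j=1}^{r-1}$, and $\{x_j\}_{j=1}^{r-1} \cup \{u, w\}$ pairwise distinct. Extend by setting $\gamma_{r+1} := \beta_t$ and $x_r := w$. Since $V(\gamma_r) \subseteq V(D')$, we have
\[
V(\gamma_r) \cap V(\gamma_{r+1}) \subseteq V(D') \cap V(\beta_t) = \{w\},
\]
and $w$ belongs to both $V(\gamma_r)$ and $V(\beta_t)$, so condition 1 holds at the new index. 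For condition 2, $x_r = w$ is distinct from $u$ and $v$ by the case assumptions and from each $x_j$ ($j < r$) by the inductive hypothesis; moreover $v \in V(\beta_t) = V(\gamma_{r+1})$, preserving the strengthened invariant.

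The main obstacle, and the reason I strengthen the statement before inducting, is that condition 2 alone does not constrain $u$ and $v$ to the endpoints of the sequence; without the invariant $u \in V(\gamma_1)$, $v \in V(\gamma_r)$ one cannot splice $\beta_t$ onto the $D'$-sequence via $w$ in case (C), because one would have no guarantee that the terminal cycle of the inductive sequence actually contains the gluing vertex. Once this invariant is adopted, the cactus structure $V(D') \cap V(\beta_t) = \{w\}$ forces the extension to be valid for free.
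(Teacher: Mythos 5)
Your proof is correct and follows essentially the same route as the paper: induction on the number of cycles in the $\ast$-decomposition $D = \beta_1 \ast \cdots \ast \beta_t$, the same case split on where $u$ and $v$ lie relative to $D' = \beta_1 \ast \cdots \ast \beta_{t-1}$ and $\beta_t$, and the same extension of the inductive sequence by $\gamma_{r+1} := \beta_t$ through the gluing vertex. Your explicit strengthening of the induction hypothesis to $u \in V(\gamma_1)$ and $v \in V(\gamma_r)$ is a sound clarification of a connectivity invariant that the paper's statement and proof use only implicitly (and that its later application in fact requires).
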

\begin{proof}
We apply induction on $t\geq 1$, to show that a digraph of the form $D = \beta_1 \ast \ldots \ast \beta_t \in \mathcal{S}$, where $\{\beta_i\}_{i=1}^{t}$ are cycles, satisfies the listed conditions. For the base case, let $t=1$. Then, $D = \beta_1$ and so, $\gamma_1:=\beta_1$ satisfies the conditions. For the inductive step, let $D = \beta_1 \ast \ldots \ast \beta_t \in \mathcal{S}$ be given. Define $D' := \beta_1 \ast \ldots \ast \beta_{t-1}$, where $V\ev{D'}\cap V\ev{\beta_t} = \{y\}$ for some $y$. Note that $D'\in \mathcal{S}$, by \Cref{rmk:basic_family} Part 2. We consider several cases:

Case 1: $u,v\in V\ev{D'}$. By the inductive hypothesis applied to $D'$ and $u,v\in V\ev{D'}$, we obtain the sequence $\{\gamma_{j}\}_{j=1}^{r}$ in question. 

Case 2: $u,v\not\in V\ev{D'}$. Necessarily, we have $u,v\in V\ev{\beta_t}$, so the sequence $\gamma_1:=\beta_t$ works, as in the base case. 

Case 3: $u\in V\ev{D'}$ and $v\not\in V\ev{D'}$. Then, by the inductive hypothesis applied to $D'$ and $u,y\in V\ev{D'}$, we obtain a sequence $\{\gamma_{j}\}_{j=1}^{r}$, such that $V\ev{ \gamma_j } \cap V\ev{ \gamma_{j+1} } = \{ x_j \}$ is a singleton, for $j=1,\ldots,r-1$ and the vertices in $\{x_j\}_{j=1}^{r-1} \cup \{ u,y\} $ are pairwise distinct. We extend the sequence by defining $\gamma_{r+1} := \beta_t$ and $x_r := y$. Since $v\notin V\ev{D'}$, we have $v\in V\ev{\beta_t} \setminus \{y\} $. Also, note that the vertices in $\{ x_j\}_{j=1}^{r-1} \cup \{y \} \cup \{u,v\}$ are pairwise distinct. Finally, $V\ev{ \beta_t } \cap V\ev{ \gamma_r } = \{y\} $, so the statement follows. 

Case 4: $v\in V\ev{D'}$ and $u\not\in V\ev{D'}$. The proof is identical with Case 3, after interchanging the roles of $u$ and $v$. 
\end{proof}

\begin{lemma}
\label{lem:unique_decomp_equiv_treelike}
Let $D$ be a connected Eulerian digraph. Then, $D$ has a unique partition into cycles if and only if $D\in \mathcal{S}$. 
\end{lemma}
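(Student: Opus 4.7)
The plan is to handle the two directions separately, the reverse direction being routine and the forward direction requiring the bulk of the work. For the reverse direction, I would induct on the number $t$ of cycles in an expression $D = \beta_1 \ast \cdots \ast \beta_t$ witnessing $D \in \mathcal{S}$: the base case $t = 1$ is immediate, and for the inductive step I would write $D = D' \ast \beta_t$ with $D' = \beta_1 \ast \cdots \ast \beta_{t-1} \in \mathcal{S}$ by \Cref{rmk:basic_family}, invoke the inductive hypothesis on $D'$, and combine its unique partition with the trivial partition $\{\beta_t\}$ via \Cref{lem:ast_decomp}~\textit{iii)}.

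For the forward direction, I would induct on $t := |\mathcal{A}|$ where $\mathcal{A} = \{\beta_1, \ldots, \beta_t\}$ is the assumed unique partition. The case $t = 1$ forces $D = \beta_1 \in \mathcal{L} \subseteq \mathcal{S}$. For $t \geq 2$, my strategy is to exhibit a \emph{leaf cycle}: an index $i$ for which $V\ev{\beta_i} \cap V\ev{D'}$ is a singleton, where $D' := \bigcup_{j \neq i} \beta_j$. Once $i$ is found, $D = D' \ast \beta_i$, and \Cref{lem:ast_decomp}~\textit{ii)} supplies $\mathcal{A} \setminus \{\beta_i\}$ as the unique partition of $D'$ (no other $\beta_j$ can be contained in $\beta_i$, lest $\beta_j = \beta_i$); the inductive hypothesis then gives $D' \in \mathcal{S}$, so $D \in \mathcal{S}$ by \Cref{rmk:basic_family}~3.

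To produce the leaf cycle I would build the bipartite \emph{incidence graph} $B$ on vertex classes $\mathcal{A}$ and $\mathcal{V}^{\star} := \{ v \in V\ev{D} : v \in V\ev{\beta_j} \text{ for at least two distinct } j\}$, joining $\beta_i$ to $v$ in $B$ iff $v \in V\ev{\beta_i}$. Connectedness of $D$ forces $B$ to be connected, and the key claim is that $B$ is acyclic. A $4$-cycle in $B$ would exhibit two cycles of $\mathcal{A}$ sharing two vertices, contradicting \Cref{lem:max_intersection}. A minimal $B$-cycle $\beta_{i_1} v_1 \beta_{i_2} v_2 \cdots \beta_{i_k} v_k \beta_{i_1}$ of length $2k \geq 6$ (with $v_0 := v_k$) instead admits a rerouting in the spirit of the previous two lemmas: split each $\beta_{i_j}$ along its two distinguished vertices into directed subpaths $P_j : v_{j-1} \to v_j$ and $Q_j : v_j \to v_{j-1}$, and form the directed cycles $\gamma := P_1 P_2 \cdots P_k$ and $\delta := Q_k Q_{k-1} \cdots Q_1$; then $\{\gamma, \delta\} \sqcup \{\beta_j : j \notin \{i_1, \ldots, i_k\}\}$ is a partition of $D$ distinct from $\mathcal{A}$, a contradiction. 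Hence $B$ is a tree; since every $v \in \mathcal{V}^{\star}$ has $B$-degree $\geq 2$, every leaf of $B$ is a cycle node, and any such leaf is the desired leaf cycle.

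The chief obstacle I foresee is verifying that the rerouted walks $\gamma$ and $\delta$ are bona fide cycles, which requires that the interiors of the arcs $\{P_j\}_{j=1}^{k}$ (and of the $\{Q_j\}_{j=1}^{k}$) are pairwise disjoint and avoid every $v_\ell$. I plan to extract this from the minimality of the chosen $B$-cycle together with \Cref{lem:max_intersection}: any extra common vertex between some pair $\beta_{i_j}, \beta_{i_{j'}}$ would either directly violate pairwise-singleton intersection when $j$ and $j'$ are $B$-adjacent, or produce a strictly shorter $B$-cycle otherwise. Once this geometric point is secured, the edge identity $E\ev{\gamma} \sqcup E\ev{\delta} = \bigsqcup_{j=1}^{k} E\ev{\beta_{i_j}}$ and the fact that $\gamma, \delta \notin \mathcal{A}$ (each contains a proper, nonempty subset of the edges of $\beta_{i_j}$ for every $j$) are purely bookkeeping.
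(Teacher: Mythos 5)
Your proof is correct, and the forward direction takes a genuinely different route from the paper. The paper argues top-down: it deletes one cycle $\beta_t$ of the unique partition, applies the inductive hypothesis to the weakly connected components of the remainder (each inherits a unique partition by \Cref{lem:decomp_basic}), and then invokes \Cref{lem:minimal_seq} --- a chain-of-cycles property of members of $\mathcal{S}$ --- to reroute and contradict uniqueness if $\beta_t$ met a component in two vertices. You instead work bottom-up: you show the bipartite incidence graph $B$ between the partition cycles and their shared vertices is a tree (a $4$-cycle contradicts \Cref{lem:max_intersection}; a shortest longer cycle is killed by the $P_j/Q_j$ rerouting, whose disjointness you correctly extract from chordlessness/minimality plus \Cref{lem:max_intersection}), and then peel off a leaf cycle and induct via \Cref{lem:ast_decomp}~\textit{ii)}. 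Your route bypasses \Cref{lem:minimal_seq} entirely and in effect establishes the tree structure of the cycle intersection pattern already at this stage (something the paper only exploits later, for Christmas cacti), at the price of the shortest-cycle disjointness analysis; the paper's version keeps the rerouting simpler by letting the inductive hypothesis ($D_i\in\mathcal{S}$) do that work. Two small points you should make explicit when writing this up: before applying \Cref{lem:ast_decomp}~\textit{ii)} and the inductive hypothesis you must check that $D'=\bigcup_{j\neq i}\beta_j$ is connected (hence Eulerian, being balanced) --- this follows from $B$ minus a leaf being connected, or directly from $V\ev{\beta_i}\cap V\ev{D'}$ being a singleton --- and, for $t\geq 2$, that connectedness of $D$ makes $\mathcal{V}^{\star}$ nonempty and $B$ connected, so a leaf of $B$ exists and must lie in the cycle class.
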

\begin{proof}
($\Longrightarrow$) By induction on the number of cycles in the unique partition of $D$. In the basis step, we have a single cycle $D = \beta_1 \in \mathcal{L}$, which implies $D\in \mathcal{S}$. For the inductive step, let $D$ be a digraph with a unique partition $\mathcal{A} = \{\beta_1,\ldots,\beta_t\}$ ($t\geq 2$) into cycles. Let $D_1, \ldots, D_m$ (for $m\geq 1$) be the weakly connected components of the subgraph $D \setminus E\ev{\beta_t}$. By \Cref{lem:decomp_basic} Part \textit{ii)}, the digraph $D_i$ has a unique partition into cycles. So, by the inductive hypothesis, we have $D_i\in \mathcal{S}$. We claim that $ | V\ev{\beta_t} \cap V\ev{ D_i } | = 1$, for each $i=1,\ldots,m$. Fix any $i$ and assume, by way of contradiction, that $u,v \in V\ev{\beta_t} \cap V\ev{ D_i } $ with $u\neq v$ (cf. \Cref{fig:uniquedecompifftreelike}). By \Cref{lem:minimal_seq}, there is a sequence of cycles $ \{  \gamma_j \}_{j=1}^{r}\subseteq \mathcal{B}\ev{D_i} \subseteq \mathcal{B}\ev{D}$ such that $V\ev{ \gamma_j } \cap V\ev{ \gamma_{j+1} } = \{ x_j \}$ is a singleton, for $j=1,\ldots,r-1$ and the vertices $\{x_j\}_{j=1}^{r-1} \cup \{ u,v \} $ are pairwise distinct. Now, we can obtain an alternative partition of $D$ into cycles as follows: Define the cycles,
$$\delta_{1}:= u \ \gamma_1 \ x_1\ \gamma_2\ x_2\ \ldots\ x_{r-1}\ \gamma_r\ v\ \beta_t \ u \text{ and } \delta_{2}:=  u\ \beta_t \ v\ \gamma_r\ x_{r-1}\ \gamma_{r-1}\ x_{r-2}\ \ldots\ \ x_2\ \gamma_2\ x_1\ \gamma_1\ u $$
It is easy to check that the collection 
$$\ev{ \{\beta_1,\ldots,\beta_t\} \setminus \{ \gamma_1,\ldots,\gamma_r, \beta_t\} } \cup \{ \delta_1,\delta_2\}$$
is a partition of $D$ into cycles, which contradicts the assumption that $D$ has a unique partition. Hence, we obtain $| V\ev{\beta_t} \cap V\ev{ D_i } | \leq 1$, for each $i=1,\ldots,m$. As $V\ev{D_i}\cap V\ev{\beta_t} \neq \emptyset$, we have $| V\ev{\beta_t} \cap V\ev{ D_i } | = 1$. The Eulerian digraphs $\{D_i\}_{i=1}^{m}$ are pairwise disjoint and so, by \Cref{rmk:basic_family} Part 3, we obtain $D = \beta_t \ast D_1 \ast \ldots \ast D_m \in \mathcal{S}$. 

($\Longleftarrow$) We apply induction on $t\geq 1$, to show that a digraph of the form $D = \beta_1 \ast \ldots \ast \beta_t \in \mathcal{S}$, where $\{\beta_i\}_{i=1}^{t}$ are cycles, has unique partition $\{\beta_1,\ldots,\beta_t\}$. The basis step, $t=1$ is clear. For the inductive step, let $D = \beta_1 \ast \ldots \ast \beta_t \in \mathcal{S}$ be given, where $t\geq 2$. By \Cref{rmk:basic_family} Part 2, we have $\beta_1 \ast \ldots \ast \beta_{t-1}\in \mathcal{S}$. By the inductive hypothesis, $\{\beta_1,\ldots, \beta_{t-1} \}$ is the unique partition of $\beta_1 \ast \ldots \ast \beta_{t-1}$. By \Cref{lem:ast_decomp} Part \textit{iii)}, this implies that $\{\beta_1,\ldots, \beta_{t}\} = \{\beta_1,\ldots, \beta_{t-1}\} \sqcup \{ \beta_t \}$ is the unique partition of $D = \ev{ \beta_1 \ast \ldots \ast \beta_{t-1} } \ast \beta_t$ into cycles. 
\end{proof}

\begin{figure}[t!]
\centering
\includegraphics[width=3.6in]{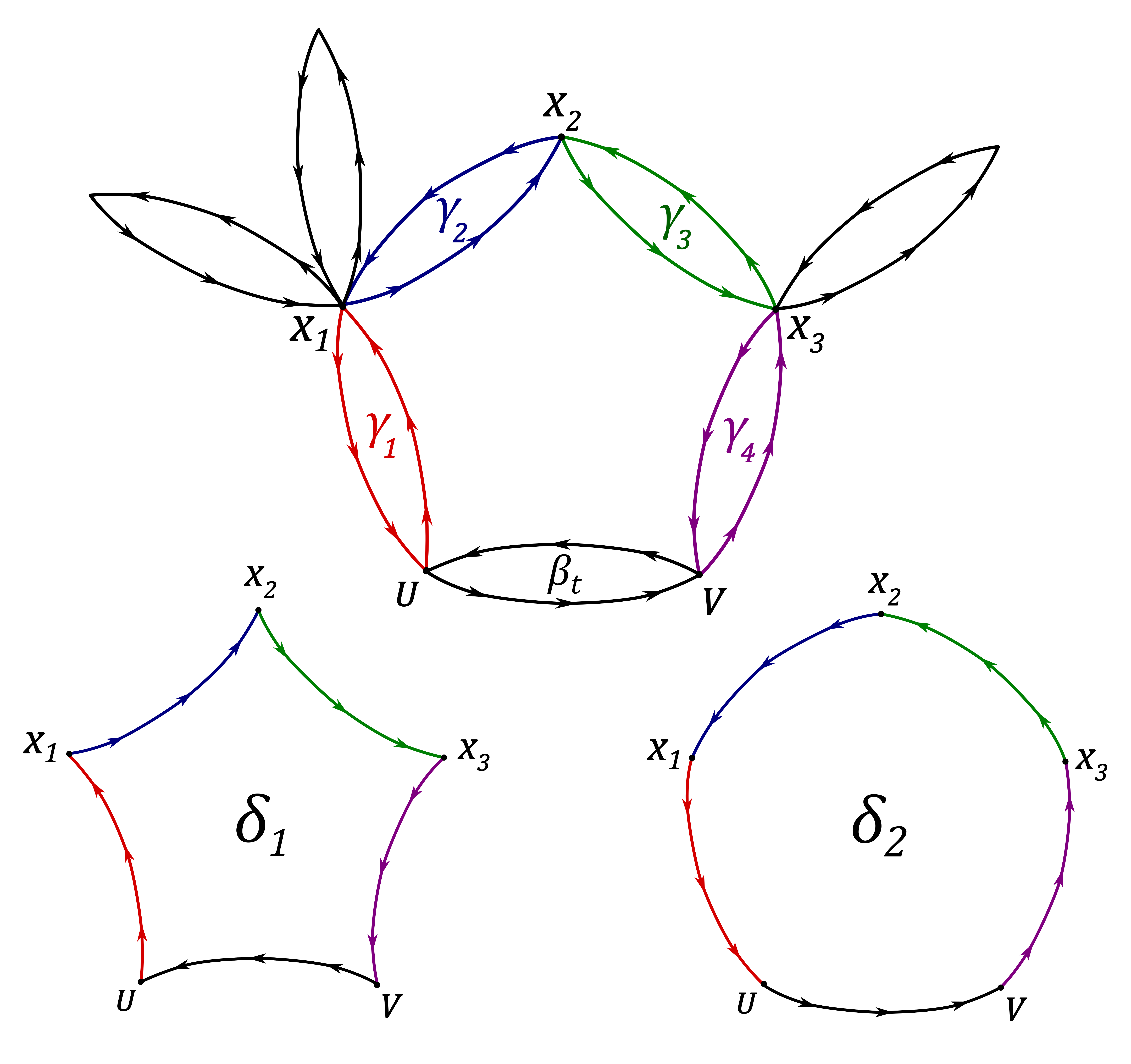} 
\caption{Alternative partition of $D_i \cup \beta_t$ in a special case of the proof of \Cref{lem:unique_decomp_equiv_treelike}}
\label{fig:uniquedecompifftreelike}
\end{figure}


Now, we list many conditions equivalent to having a unique partition into cycles, for a connected Eulerian digraph. First, let us recall some preliminary definitions. 

It is well known that, for a connected digraph $D$, if $D$ is Eulerian, then $D$ is \textit{strongly connected}, i.e., there is a directed path between every distinct pair of vertices. Furthermore, strongly connected digraphs are bridgeless. Also, $D$ is bridgeless if and only if every edge of $D$ is contained in at least one cycle. 

Given a connected digraph $D$, then consider the partition lattice $\Pi\ev{D}$ of the edge set $E\ev{D}$. The subposet of $\Pi\ev{D}$ induced by partitions into Eulerian (and connected) parts is denoted by $T\ev{D}$. It is previously shown (\cite[Lemma~3.1, p.~16]{co3}) that $T\ev{D}$ is a join-semilattice and it is a lattice if and only if $D$ has a unique partition into cycles. 

We obtain the following characterizing theorem for connected Eulerian digraphs with a unique partition into cycles.

\begin{theorem}
\label{thm:unique_decomp_equiv_digraph}
Given a connected Eulerian digraph $D$, then the following are equivalent:
\begin{enumerate}
\item[\textit{(1)}]	No two cycles of $D$ share an edge, i.e., $D$ is a (bridgeless) cactus digraph.
\item[\textit{(2)}] Every edge of $D$ is contained in exactly one cycle. 
\item[\textit{(3)}] $\mathcal{B}\ev{D} $ is a partition of $D$ into cycles.
\item[\textit{(4)}] $D$ has a unique partition into cycles.
\item[\textit{(5)}] $D \in \mathcal{S}$. 
\item[\textit{(6)}] $|\mathcal{B}\ev{D}| = \mathbb{C}\ev{D}  $, where $\mathbb{C}\ev{D} := |E\ev{D}|-|V\ev{D}|+1$.
\item[\textit{(7)}] $\tau_D = 1$, i.e., $D$ has a unique arborescence rooted at $u$, for any $u\in V\ev{D}$.
\item[\textit{(8)}] $T\ev{D}$ is a lattice and in particular has a unique minimal element.
\end{enumerate}
\end{theorem}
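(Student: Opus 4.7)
The plan is to establish the eight equivalences via a cyclic chain $(1)\Rightarrow(2)\Rightarrow(3)\Rightarrow(4)\Leftrightarrow(5)\Rightarrow(1)$ among the first five conditions, together with separate bi-implications $(5)\Leftrightarrow(6)$, $(5)\Leftrightarrow(7)$, and $(4)\Leftrightarrow(8)$. Of these, $(4)\Leftrightarrow(5)$ is exactly \Cref{lem:unique_decomp_equiv_treelike} and $(4)\Leftrightarrow(8)$ is cited from \cite[Lemma~3.1]{co3}. The short implications of the main cycle are essentially immediate: for $(1)\Rightarrow(2)$, an Eulerian digraph is strongly connected and hence bridgeless, so each edge lies in at least one cycle, and $(1)$ forces exactly one; for $(2)\Rightarrow(3)$, the sets $\{E(\beta):\beta\in\mathcal{B}(D)\}$ are pairwise disjoint and cover $E(D)$ exactly when each edge belongs to a unique cycle; for $(3)\Rightarrow(4)$, any partition $\mathcal{A}$ of $D$ into cycles satisfies $\mathcal{A}\subseteq\mathcal{B}(D)$, and a sub-family of a partition that is itself a partition must coincide with it; for $(5)\Rightarrow(1)$, writing $D=\beta_1\ast\cdots\ast\beta_t$ and applying \Cref{rmk:ast_directed_cycles} inductively yields $\mathcal{B}(D)=\{\beta_1,\ldots,\beta_t\}$ with pairwise disjoint edge sets.

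For $(5)\Leftrightarrow(6)$, the forward direction is a short induction on $t$: each $\ast$-step attaches a cycle $\beta$ at a single vertex, so $|V|$ grows by $|V(\beta)|-1$ and $|E|$ by $|V(\beta)|$; hence $\mathbb{C}(D)$ grows by exactly $1$ at each step, and $\mathbb{C}(D)=t=|\mathcal{B}(D)|$ by \Cref{rmk:ast_directed_cycles}. The reverse $(6)\Rightarrow(5)$ is the substantive direction: I would first establish the general inequality $|\mathcal{B}(D)|\geq\mathbb{C}(D)$ (which follows because directed cycles span the cycle space of a strongly connected digraph, whose dimension is $\mathbb{C}(D)$) and then identify the equality case by picking a cycle $\beta\in\mathcal{B}(D)$, decomposing $D\setminus E(\beta)$ into Eulerian components $D_1,\ldots,D_m$, and showing that if some $D_i$ meets $\beta$ at two distinct vertices $u,v$, then the cycle formed by concatenating the $u\to v$ arc of $\beta$ with a $v\to u$ path inside $D_i$ is an element of $\mathcal{B}(D)$ beyond $\{\beta\}\cup\bigsqcup_i\mathcal{B}(D_i)$, forcing $|\mathcal{B}(D)|>\mathbb{C}(D)$. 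For $(5)\Leftrightarrow(7)$, the forward direction inducts on $t$: a directed cycle admits a unique out-arborescence from any root (drop the unique edge entering the root), and arborescences of $D_1\ast D_2$ factor as a pair of arborescences on each factor, rooted appropriately. The converse $(7)\Rightarrow(5)$ is by contrapositive: given two cycles sharing an edge, I would construct two distinct out-arborescences by swapping sub-paths between common vertices, in the spirit of the alternative-partition constructions of \Cref{lem:max_intersection}.

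The main obstacle I anticipate is the $(6)\Rightarrow(5)$ implication, since the tight numerical equality $|\mathcal{B}(D)|=\mathbb{C}(D)$ must be translated into the rigid single-vertex-attachment structure defining $\mathcal{S}$. Beyond the inequality $|\mathcal{B}(D)|\geq\mathbb{C}(D)$, the delicate step is the careful surplus-cycle count: whenever a component $D_i$ of $D\setminus E(\beta)$ meets $\beta$ at two distinct vertices, I must exhibit a cycle of $D$ that is genuinely new relative to $\{\beta\}\cup\bigsqcup_i\mathcal{B}(D_i)$, paralleling the splicing arguments in \Cref{lem:max_intersection} and the $(\Rightarrow)$ direction of \Cref{lem:unique_decomp_equiv_treelike}.
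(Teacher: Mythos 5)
Your architecture (the cycle through (1)--(5) plus separate equivalences with (6), (7), (8)) is fine, and the short implications, the $(5)\Rightarrow(6)$ induction, and the $(5)\Rightarrow(7)$ factorization are all correct; but the direction you yourself single out as delicate, $(6)\Rightarrow(5)$, has a genuine counting gap as set up. Write $a_i=|V(D_i)\cap V(\beta)|$ for the Eulerian components $D_1,\ldots,D_m$ of $D\setminus E(\beta)$; each $a_i\geq 1$ by connectivity, and a direct count gives $\mathbb{C}(D)=1+\sum_i \mathbb{C}(D_i)+\sum_i(a_i-1)$. Your argument yields the bound $|\mathcal{B}(D)|\geq 1+\sum_i|\mathcal{B}(D_i)|+1\geq 2+\sum_i\mathbb{C}(D_i)$, but precisely when some $a_i\geq 2$ the right-hand side is at most $\mathbb{C}(D)$, so exhibiting a single surplus cycle does not force $|\mathcal{B}(D)|>\mathbb{C}(D)$ and no contradiction with the assumed equality is reached. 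Concretely, take $\beta$ a directed triangle $u\to v\to w\to u$ together with a digon on two extra edges $u\to v$ and $v\to u$: here $m=1$, $a_1=2$, $\mathbb{C}(D)=3$, and $\{\beta\}\cup\mathcal{B}(D_1)$ plus one mixed cycle certifies only $|\mathcal{B}(D)|\geq 3=\mathbb{C}(D)$, although in fact $|\mathcal{B}(D)|=4$. To close the argument you must produce at least $1+\sum_i(a_i-1)$ surplus cycles; this is doable, e.g.\ for each component $D_i$ with $a_i\geq 2$ and each pair of attachment vertices that are \emph{consecutive along $\beta$}, splice the $\beta$-arc between them with a simple reverse path inside $D_i$ (consecutiveness also repairs a second flaw in your step: for an arbitrary pair $u,v$ the $\beta$-arc and the $D_i$-path may share interior vertices, so your spliced walk need not be a cycle). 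This gives $a_i$ mixed cycles per bad component and the count closes. Note that this whole route differs from the paper, which proves $(6)\Rightarrow(5)$ by strengthening to the inequality $|\mathcal{B}(D)|\geq\mathbb{C}(D)$ together with its equality case and inducting on $|E(D)|$ via edge contraction and digon/cycle deletion in four cases, thereby avoiding this bookkeeping.

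A secondary concern is $(7)\Rightarrow(5)$: the contrapositive plan ``two cycles sharing an edge yield two distinct arborescences by swapping sub-paths'' is not yet a proof. Replacing a whole sub-path of an arborescence destroys the in-degree-one property at the interior vertices of both paths, and the single-edge exchange at a divergence vertex $w$ (with distinct out-edges $f_1,f_2$ on the two cycles) only succeeds when the arborescence rooted at $w$ omits $f_1$ or $f_2$; if it contains both, a further exchange at a re-convergence vertex is needed and its validity is not automatic. The paper's argument for this direction is cleaner and worth adopting: show the unique arborescence rooted at the tail of a fixed edge $e$ must contain $e$ (else a root-edge exchange contradicts uniqueness), contract $e$, apply \Cref{rmk:edge_contract_arb_num}, and induct on $|E(D)|$.
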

\begin{proof}

\phantom{a}

The equivalences \textit{(1)}$\Longleftrightarrow$\textit{(2)} and \textit{(1)}$\Longleftrightarrow$\textit{(3)} are clear. 

\textit{(3)} $\Longrightarrow$ \textit{(4)}: By hypothesis, we have a disjoint union 
$$
E\ev{D} = \bigsqcup_{\gamma\in \mathcal{B}\ev{D} } E\ev{ \gamma }
$$
In particular, we have 
\begin{equation}
\label{eqn:gammas_distinct}
E\ev{ \gamma_1 } \cap E\ev{ \gamma_2 } = \emptyset \qquad \text{  for any two distinct $\gamma_1, \gamma_2\in \mathcal{B}\ev{D}$}
\end{equation}
Let $ \mathcal{A} = \{ \beta_1,\ldots,\beta_t\}$ be any partition of $D$ into cycles. We claim that $\mathcal{A} = \mathcal{B}\ev{D} $. By the definition of $\mathcal{B}\ev{D}$, we have $\mathcal{A}\subseteq \mathcal{B}\ev{D}$. Conversely, given $\gamma\in \mathcal{B}\ev{D}$, we have
$$ 
\emptyset \neq E\ev{ \gamma } \cap E\ev{D} = E\ev{ \gamma } \cap \ew{ \bigsqcup_{i=1}^{t} E\ev{ \beta_i }  }= \bigsqcup_{i=1}^{t}  \ev{ E\ev{ \gamma } \cap E\ev{ \beta_i } }
$$
So, there is at least one $\beta_i$ such that $E\ev{ \beta_i }\cap E\ev{ \gamma } \neq \emptyset$. By (\ref{eqn:gammas_distinct}), we obtain $\gamma = \beta_i \in \mathcal{A}$. 

\textit{(4)} $\Longleftrightarrow$ \textit{(5)}: By \Cref{lem:unique_decomp_equiv_treelike}.

\textit{(5)} $\Longrightarrow$ \textit{(1)}, \textit{(5)} $\Longrightarrow$ \textit{(6)}, \textit{(5)} $\Longrightarrow$ \textit{(7)}: By induction on the number of cycles. 

\textit{(6)} $\Longrightarrow$ \textit{(5)}: By induction on the number of edges, we show that for any Eulerian digraph $D=\ev{V\ev{D},E\ev{D},\psi}$, we have
\begin{align}
& |\mathcal{B}\ev{D}|\geq \mathbb{C}\ev{D} \label{eq:1} \\ 
& |\mathcal{B}\ev{D}| = \mathbb{C}\ev{D} \text{ implies } D \in \mathcal{S} \label{eq:2}
\end{align}
The base case of $|E\ev{D}| = 2$ is clear. For the inductive step, let $E\ev{u,v} := \{ e\in E\ev{D} : \psi\ev{e} = \ev{u,v}\}$ for any pair of distinct vertices $u,v \in V\ev{D}$. We have four cases to consider. 

\textbf{Case 1:} If there is some pair $u,v$ such that $|E\ev{u,v}| = 1$ and $|E\ev{v,u}|=0$, then let $e$ be the unique edge such that $\psi\ev{e} = \ev{u,v}$. Consider the quotient digraph $D'$ obtained by contracting $e$ (q.v. \Cref{def:edge_contract}). Then, we have $ | \mathcal{B}\ev{D'} | = |\mathcal{B}\ev{D}|$ and $ \mathbb{C}\ev{D'} = \mathbb{C}\ev{D} $. To show (\ref{eq:2}), assume $|\mathcal{B}\ev{D}| = \mathbb{C}\ev{D}$. By the inductive hypothesis, we obtain $D'\in \mathcal{S}$. Then, $D\in \mathcal{S}$ follows. 

\textbf{Case 2:} If there is a pair $u,v$ such that $|E\ev{u,v}| = 1$ and $|E\ev{v,u}|=1$, then there is an edge $e\in E\ev{D}$ with $\psi\ev{e} = \ev{u,v}$. Let $D'$ be the quotient digraph obtained by contracting $e$ (q.v. \Cref{def:edge_contract}). It is easy to see that $|\mathcal{B}\ev{D'}| = |\mathcal{B}\ev{D}| - 1$ and $\mathbb{C}\ev{D'} = \mathbb{C}\ev{D} - 1$. As in Case 1, the inductive hypothesis applies and the statements follow.

\textbf{Case 3:} For each pair $u,v$, we have $|E\ev{u,v}| = 0 $ and $|E\ev{v,u}|\geq 2$ or $|E\ev{u,v}|\geq 2$ and $|E\ev{v,u}| = 0 $. Then, removing a cycle does not disconnect $D$. So, let $\beta \in \mathcal{B}\ev{D}$ be any cycle, of length $\ell \geq 2$ and $D'$ be the Eulerian subdigraph with the same vertex set and edge set $E\ev{D} \setminus E\ev{\beta}$. We claim that $| \mathcal{B}\ev{D} | > \mathbb{C}\ev{D}$. Note that $D'$ is Eulerian, $\mathbb{C}\ev{D} = \mathbb{C}\ev{D'} + \ell$ and $ | \mathcal{B}\ev{D}| \geq | \mathcal{B}\ev{D'} | + 2^\ell - 1 $. In particular, by the inductive hypothesis, we obtain 
$$
| \mathcal{B}\ev{D} | \geq | \mathcal{B}\ev{D'} | + 2^\ell - 1 > \mathbb{C}\ev{D'} + \ell = \mathbb{C}\ev{D} 
$$ 
and both claims follow.

\textbf{Case 4:} For each pair $u,v$, we have $|E\ev{u,v}| \geq 1 $ and $|E\ev{v,u}|\geq 2$ or $|E\ev{u,v}|\geq 2$ and $|E\ev{v,u}| \geq 1 $. Let $e,f$ be fixed such that $\psi\ev{e} =\ev{u,v}$ and $\psi\ev{f} = \ev{v,u}$. Let $D' $ be the subdigraph with the same vertex set and edge set $E\ev{D}\setminus \{e,f\}$. Again, we claim that $| \mathcal{B}\ev{D} | > \mathbb{C}\ev{D} $. Note that $D'$ is Eulerian, $\mathbb{C}\ev{D} = \mathbb{C}\ev{D'} + 2$ and $ | \mathcal{B}\ev{D}| \geq | \mathcal{B}\ev{D'} | + 2 $.  First, it follows from the first part of the inductive hypothesis that
$$
| \mathcal{B}\ev{D} | \geq | \mathcal{B}\ev{D'} | + 2 \geq \mathbb{C}\ev{D'} + 2 = \mathbb{C}\ev{D} 
$$
Suppose, for a contradiction, that $| \mathcal{B}\ev{D} | = | \mathbb{C}\ev{D}  |$. Then, the above inequalities are equalities and in particular, $| \mathcal{B}\ev{D} | = | \mathcal{B}\ev{D'} | + 2$. By the second part of the inductive hypothesis, we obtain $D'\in \mathcal{S}$. Then, there is a unique cycle $\beta$ of $D'$ containing $u$ and $v$. However, in this case we have at least three distinct cycles in $\mathcal{B}\ev{D} \setminus \mathcal{B}\ev{D'}$, namely: 
$$
[u\beta v f u]_{\rho}, \ [v\beta u e v]_{\rho} ,\ [u e v f u]_{\rho} 
$$
which is a contradiction.

\textit{(7)} $\Longrightarrow$ \textit{(5)}: By induction on the number of edges. The base case $|E\ev{D}|=2$ is clear, as there is a unique Eulerian digraph on $2$ edges. For the inductive step, let $e\in E\ev{D}$ be a fixed edge, with $\psi\ev{e} = \ev{u,v}$. Let $T \in \tau^{u}\ev{D}$ be the unique arborescence of $D$, rooted at $u$. First, we claim that $e\in E\ev{T}$. Suppose, to the contrary, that $e\notin E\ev{T}$. Let $f\in E\ev{T}$ be the unique edge of $T$ with terminal vertex $v$. Then, we obtain a distinct arborescence $T' := (T \setminus \{f\}) \cup \{e\} \in \tau^{u}\ev{D}$, which is a contradiction. Let $D'$ be the Eulerian digraph obtained by contracting $e$ (q.v. \Cref{def:edge_contract}). By \Cref{rmk:edge_contract_arb_num}, we have $1 = \tau_D = | \tau^{u}\ev{D} | = | \tau^{u}_e\ev{D} | = | \tau^{u'} \ev{ D' } |$. By the inductive hypothesis, $D'\in \mathcal{S}$. It follows that $D \in \mathcal{S}$. 

\textit{(4)} $\Longleftrightarrow$ \textit{(8)}: By \cite[Lemma~3.1, p.~16]{co3}.
\end{proof}

By the preceding theorem, whenever a digraph has a unique partition into cycles, we can deduce that the unique partition must use all cycles of $D$. We finish this section by noting that yet another condition equivalent to $| \mathcal{B}\ev{D} | = \mathbb{C}\ev{D}$ is given in \cite[p.~132, Exercise 5.4.3.(b)]{bondy}.

\subsection{Digraphs with a unique Eulerian circuit}
\label{sec:unique_eulerian}
A special type of a connected Eulerian digraph is one with a unique Eulerian circuit, i.e. $|\mathcal{C}\ev{D}|=1$. A sufficient condition for having a unique Eulerian circuit is given by Arratia, Bollob\'{a}s, and Sorkin (\cite[p.~202, Lemma 3]{bollobas}), for a \textit{2-in, 2-out} digraph, i.e., an Eulerian digraph $D$, where each vertex has in-degree and out-degree $2$, possibly with loops. In \Cref{prop:unique_eulerian}, we obtain a characterization for connected loopless Eulerian digraphs, without imposing the condition of 2-in, 2-out. Afterwards, in \Cref{rmk:vacuous}, we generalize the result to any digraph, possibly with loops.

\begin{definition}
Given a digraph $D$ and a closed trail 
$$
\mathbf{w} = \ev{v_0,e_1,v_1,e_2,\ldots, v_{d-1}, e_{d}, v_d} \in \mathcal{T}\ev{D}
$$
then a pair $\ev{a,b} \in V\ev{D}\times V\ev{D}$ of vertices is an \textit{interlacing pair} of $\mathbf{w}$, provided $\mathbf{w}$ visits them in the order $a,b,a,b$ at least once, in other words, there are indices $0\leq i_1 < i_2 < i_3 < i_4 \leq d $ such that $v_{i_1} = v_{i_3} = a$ and $v_{i_2} = v_{i_4} = b$ and $v_j \in V\ev{D} \setminus \{a,b\}$ for $j \in [i_1,i_4]\setminus \{ i_2,i_3 \}$. The pair $\ev{a,b}$ is an interlacing pair of an Eulerian circuit $z\in \mathfrak{C}\ev{D}$, provided there is some $\mathbf{w}$ such that $z=[\mathbf{w}]_{\rho}$ and $\ev{a,b}$ is an interlacing pair of $\mathbf{w}$.
\end{definition}
It follows from the definition that if $\ev{a,b}$ is an interlacing pair of an Eulerian circuit $z$, then $\ev{b,a}$ is also an interlacing pair of $z$. Also, it is easy to see that for a vertex $a\in V\ev{D}$ and an Eulerian circuit $z\in \mathfrak{C}\ev{D}$,
\begin{equation}
\label{eq:3}
\text{ $\ev{a,a}$ is an interlacing pair of $z$ if and only if the out-degree of $a$ is at least $3$. }
\end{equation}

Recall that a subdigraph of a connected digraph $D$ is \textit{biconnected}, provided the removal of a vertex does not disconnect it. A \textit{block} or \textit{biconnected component} in a digraph is a biconnected subdigraph that is maximal with respect to inclusion of vertices. A cactus digraph is a \textit{Christmas} cactus digraph, provided each vertex is contained in at most two blocks. Note that the blocks of a bridgeless cactus digraph $D$ are exactly the cycles of $D$. 

Given a set of cycles $\mathcal{A}$, then the \textit{intersection graph} of $\mathcal{A}$, denoted $G_\mathcal{A}$, is the graph with vertex set $\mathcal{A}$ and there is an edge between two vertices $\beta_1,\beta_2 \in \mathcal{A}$ if and only if $V\ev{\beta_1} \cap V\ev{\beta_2} \neq \emptyset$ (c.f. \cite[p.~1]{mckee}). It is easily seen that $D$ is a bridgeless Christmas cactus digraph if and only if $G_{\mathcal{B}\ev{D}}$ is a tree. 

We are ready to list conditions equivalent to having a unique Eulerian circuit, for a digraph $D$. The equivalence \textit{(3)}$\Longleftrightarrow$\textit{(4)} is stated in \cite[p.~67]{pevzner} and shown in \cite[p.~152, Theorem 7.5]{waterman} (also see \cite{acosta}). (The following proof provides an alternative argument.)

\begin{proposition}
\label{prop:unique_eulerian}
Given a digraph $D$, then the following are equivalent: 
\begin{enumerate}
\item[\textit{(1)}] $D \in \mathcal{S} $ ($D$ is a bridgeless cactus digraph) and $\Delta^{+}\ev{D} \leq 2$, where $\Delta^{+}\ev{D}$ is the maximum out-degree of the vertices of $D$. 
\item[\textit{(2)}] $D \in \mathcal{S}$ ($D$ is a bridgeless cactus digraph) and each vertex is contained in at most two blocks; in other words, $D$ is a bridgeless Christmas cactus digraph.
\item[\textit{(3)}] The intersection graph $G_{\mathcal{B}\ev{D}}$ is a tree. 
\item[\textit{(4)}] $D$ has a unique Eulerian circuit. 
\item[\textit{(5)}] $D$ has an Eulerian circuit with no interlacing pair. 
\end{enumerate}
\end{proposition}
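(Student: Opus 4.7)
The strategy is to split the five conditions into three clusters and close the equivalences: the structural equivalences \textit{(1)}--\textit{(3)} reduce to block/intersection-graph bookkeeping, \textit{(1)} $\Leftrightarrow$ \textit{(4)} falls out of the B.E.S.T.\ Theorem combined with \Cref{thm:unique_decomp_equiv_digraph}, and \textit{(4)} $\Leftrightarrow$ \textit{(5)} is obtained from a swap/transition-system argument. Linking the three clusters yields in particular the alternative proof of \textit{(3)} $\Leftrightarrow$ \textit{(4)} promised just above the proposition. For \textit{(1)} $\Leftrightarrow$ \textit{(2)}, both sides already include $D \in \mathcal{S}$, so by \Cref{thm:unique_decomp_equiv_digraph} the blocks of $D$ are exactly its cycles. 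Each cycle through a vertex $v$ contributes exactly one outgoing edge at $v$, so $\deg_D^+\ev{v}$ equals the number of blocks containing $v$; hence $\Delta^+\ev{D} \leq 2$ coincides with the Christmas condition.

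For \textit{(1)} $\Leftrightarrow$ \textit{(4)}, the B.E.S.T.\ identity $|\mathfrak{C}\ev{D}| = \tau_D \prod_v (\deg_D^+\ev{v}-1)!$ together with \Cref{thm:unique_decomp_equiv_digraph}(7) (namely $\tau_D = 1 \Leftrightarrow D \in \mathcal{S}$) makes $|\mathfrak{C}\ev{D}| = 1$ equivalent to having both $D \in \mathcal{S}$ and $\Delta^+\ev{D} \leq 2$, since each factor in the product is a positive integer and $(\deg_D^+\ev{v}-1)! = 1$ iff $\deg_D^+\ev{v} \leq 2$. For \textit{(2)} $\Leftrightarrow$ \textit{(3)}, the plan is to induct on the number of cycles. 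In the forward direction, the decomposition $D = \beta_1 \ast \cdots \ast \beta_t$ and the Christmas condition show that each $\beta_i$ attaches to $\beta_1 \ast \cdots \ast \beta_{i-1}$ at a single vertex lying in exactly one earlier cycle, so $G_{\mathcal{B}\ev{D}}$ grows by a pendant and stays a tree. In the reverse direction, triangle-freeness of $G_{\mathcal{B}\ev{D}}$ rules out three cycles sharing a common vertex, and combining the tree identity $|\mathcal{B}\ev{D}| = |E(G_{\mathcal{B}\ev{D}})| + 1$ with an inductive count of $\mathbb{C}\ev{D}$ along a leaf of the intersection tree lets us invoke \Cref{thm:unique_decomp_equiv_digraph}(6) to conclude $D \in \mathcal{S}$.

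For \textit{(4)} $\Leftrightarrow$ \textit{(5)}, the direction \textit{(4)} $\Rightarrow$ \textit{(5)} is the standard swap: if a unique Eulerian circuit $z$ had an interlacing pair $\ev{a,b}$ visited in the order $a, b, a, b$, splitting $z = P_1 P_2 P_3 P_4$ at those visits and reconcatenating as $P_1 P_4 P_3 P_2$ produces a second Eulerian circuit, contradicting uniqueness. For \textit{(5)} $\Rightarrow$ \textit{(4)}, assuming another Eulerian circuit exists besides the given no-interlacing one, the two circuits must disagree on the transition system at some vertex of out-degree $\geq 2$, and tracing through this local discrepancy exhibits an interlacing pair in the no-interlacing circuit, a contradiction.

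The main obstacle will be \textit{(3)} $\Rightarrow$ \textit{(2)}: while the Christmas part follows immediately from triangle-freeness of $G_{\mathcal{B}\ev{D}}$, upgrading to the full bridgeless-cactus statement (no edge in two cycles) requires a counting argument that is sensitive to how cycles overlap in edges, not merely in vertices, and presumably must be funnelled through \Cref{thm:unique_decomp_equiv_digraph}(6) rather than read off the intersection graph directly. A secondary technical point is verifying that the rearrangement $P_1 P_4 P_3 P_2$ in the swap argument genuinely represents a different $\rho$-equivalence class, which holds generically but must be checked in corner cases.
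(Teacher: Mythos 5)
Most of your skeleton is sound, and two pieces are actually cleaner than the paper's: your swap argument for \textit{(4)}$\Rightarrow$\textit{(5)} (split the circuit at an $a,b,a,b$ pattern as $P_1P_2P_3P_4$ and pass to $P_3P_2P_1P_4$, which is a valid Eulerian trail and lies in a different $\rho$-class since after the block $P_3$ it continues with the first edge of $P_2$ rather than of $P_4$), and your use of the B.E.S.T.\ identity in \emph{both} directions of \textit{(1)}$\Leftrightarrow$\textit{(4)}, whereas the paper proves \textit{(1)}$\Rightarrow$\textit{(4)} by induction on the number of cycles and uses B.E.S.T.\ only for \textit{(4)}$\Rightarrow$\textit{(1)}. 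However, your \textit{(5)}$\Rightarrow$\textit{(4)} is a genuine gap: ``the two circuits must disagree on the transition system at some vertex, and tracing through this local discrepancy exhibits an interlacing pair'' is precisely the nontrivial content of this implication (it is the cited Theorem~7.5 of Waterman), and it is not automatic, because the second circuit may deviate at many vertices, so the discrepancy is not local; one must first use (\ref{eq:3}) to force $\Delta^{+}\ev{D}\leq 2$ from the absence of interlacing, decompose the no-interlacing circuit at the branch vertex into two closed subwalks, and then carry out a careful analysis of where the rerouted circuit re-enters to manufacture two vertices visited in the pattern $a,b,a,b$. The paper sidesteps this entirely by proving \textit{(5)}$\Rightarrow$\textit{(1)} by induction on $|E\ev{D}|$: peel off the first simple closed subtrail $\beta$ (\Cref{def:first_simple_closed_trail}), apply the inductive hypothesis to $D\setminus E\ev{\beta}$, and show $\beta$ attaches at a single vertex, since a second attachment vertex would create an interlacing pair. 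Either supply your tracing argument in full or adopt that edge-induction.

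The second gap is \textit{(3)}$\Rightarrow$\textit{(2)}, which you flag yourself. Your proposed route (the tree identity plus an inductive count of $\mathbb{C}\ev{D}$ by deleting a leaf of the intersection tree, then \Cref{thm:unique_decomp_equiv_digraph}\textit{(6)}) is shakier than you suggest: before you know $D$ is a cactus, deleting the edge set of a leaf cycle may destroy other cycles (exactly when cycles share edges, the situation you are trying to exclude) and may disconnect $D$, so you control neither $|\mathcal{B}\ev{D'}|$, nor $\mathbb{C}\ev{D'}$, nor the tree property of the smaller intersection graph. A short direct argument is available: if distinct cycles $\beta_1,\beta_2$ share an edge, take a maximal subpath of $\beta_2$ avoiding $E\ev{\beta_1}$; its first edge $g$ has tail on $\beta_1$, and since $D\setminus E\ev{\beta_1}$ is Eulerian, Veblen's theorem yields a cycle $\delta$ through $g$ that is edge-disjoint from $\beta_1$, hence $\delta\notin\{\beta_1,\beta_2\}$, yet $\delta$ meets both $\beta_1$ and $\beta_2$ in a vertex; then $\{\beta_1,\beta_2,\delta\}$ is a triangle in $G_{\mathcal{B}\ev{D}}$, contradicting \textit{(3)}. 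With the cactus property in hand, \Cref{thm:unique_decomp_equiv_digraph} gives $D\in\mathcal{S}$, blocks coincide with cycles, and triangle-freeness yields the Christmas condition. (For calibration: the paper itself disposes of \textit{(2)}$\Leftrightarrow$\textit{(3)} with only the remark that it is ``similar to \textit{(1)}$\Leftrightarrow$\textit{(2)}'', so here you are being asked to do more than the authors wrote, but your particular plan as stated would not go through.) Your \textit{(1)}$\Leftrightarrow$\textit{(2)} and your pendant-attachment induction for \textit{(2)}$\Rightarrow$\textit{(3)} are fine and match the paper's intent.
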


\begin{proof}
$\neg$\textit{(2)} $\Longrightarrow$ $\neg$\textit{(1)}:  Assume that $D$ has a vertex $v$ contained in at least three blocks, i.e., cycles of $D$. Then, the out-degree of $v$ is at least three, since $D$ has no two cycles sharing an edge, which is a contradiction. 

\textit{(2)} $\Longrightarrow$ \textit{(1)}: Assume \textit{(2)} and suppose, by way of contradiction, that $D$ has a vertex $v$ of out-degree $k \geq 3$. Choose three distinct edges $\{e_i\}_{i=1}^{3}$ with initial vertex $v$. Then, as $D$ is bridgeless, there are at least three cycles $\{ \beta_i \}_{i=1}^{3}$ such that $\beta_i$ contains $e_i$, for each $i$. As $D$ is a cactus digraph, the cycles $\{\beta_i\}_{i=1}^{3}$ are three distinct blocks containing $v$, a contradiction. 	

\textit{(2)} $\Longleftrightarrow$ \textit{(3)}: The proof is similar to that of \textit{(1)} $\Longleftrightarrow$ \textit{(2)}. 

\textit{(1)} $\Longrightarrow$ \textit{(4)}: By induction on the number $t$ of cycles of $D$. The base case $t=1$ is clear. For the inductive step, let $D = \beta_1\ast \ldots \ast \beta_t \in \mathcal{S}$ and $D' := \beta_1 \ast \ldots \ast \beta_{t-1}$ and fix an edge $e\in E\ev{D'}$. By the inductive hypothesis, $D'$ has a unique Eulerian circuit and so, it has a unique Eulerian trail $\mathbf{w}$ ending at $e$. If $D$ has two distinct Eulerian trails $\mathbf{w}_1$ and $\mathbf{w}_2$ ending at $e$, then we obtain distinct Eulerian trails $\mathbf{w}_i \setminus \beta_t$ ($i=1,2$) of $D'$, which is a contradiction. 

\textit{(4)} $\Longrightarrow$ \textit{(1)}: By the B.E.S.T. Theorem (q.v. \Cref{thm:best}), we obtain $\tau_D = 1$ and $\Delta^{+}\ev{D} \leq 2$. The statement follows from \Cref{thm:unique_decomp_equiv_digraph}.

\textit{(1)} $\Longrightarrow$ \textit{(5)}: We proceed by induction on the number of cycles. Let $D = \beta_1\ast \ldots \ast \beta_t \in \mathcal{S}$ be a bridgeless cactus digraph with maximum out-degree $2$. The base case, $t=1$ is clear. For $t\geq 2$, let $D' := \beta_1 \ast \ldots \ast \beta_{t-1}$ and $V\ev{D'}\cap V\ev{\beta_t} = \{a\}$. By the implication \textit{(1)} $\Longrightarrow$ \textit{(4)}, $D$ has a unique Eulerian circuit $z\in \mathfrak{C}\ev{D}$. Let $\mathbf{w} = \ev{v_0,e_1,v_1,e_2,\ldots, v_{d-1}, e_d, v_d}$ be any Eulerian closed trail such that $z = [\mathbf{w}]_{\rho}$. If $v_0\in V\ev{D'}$, then $\mathbf{w}':= \mathbf{w} \setminus \beta_t $ is an Eulerian closed trail of $D'$. If, on the other hand, $v_0 \in V\ev{\beta_t}\setminus V\ev{D'}$, then we have $v_i = v_j = a $ for some $0 < i < j < d$, as the out-degree of $a$ is $2$, and so $\mathbf{w}':= v_i \mathbf{w} v_j$ is an Eulerian closed trail of $D'$ and $
\mathbf{w} = (v_0 \mathbf{w} v_i) \mathbf{w}' (v_j\mathbf{w}v_d)$. In both cases, $\mathbf{w}'$ has no interlacing pair by the inductive hypothesis. The vertices $V\ev{\beta_t}\setminus \{a\}$ are visited exactly once by $\mathbf{w}$ and so they can not participate in an interlacing pair. Finally, we infer by (\ref{eq:3}) that $\ev{a,a}$ is not an interlacing pair of $z$. 

\textit{(5)} $\Longrightarrow$ \textit{(1)}: We apply induction on the number of edges. The base case is clear. In the inductive step, let $z$ be an Eulerian circuit of $D$ with no interlacing pairs. Fix an edge $e\in E\ev{D}$. Then, $z = [\mathbf{w}]_{\rho}$ for some Eulerian closed trail 
$$ 
\mathbf{w} = \ev{v_0,e_1,v_1,e_2,\ldots, v_{d-1}, e, v_d}
$$
ending at $e$ and with no interlacing pair. Let $\mathbf{w}_1 := v_i \mathbf{w} v_j$ be the first simple closed trail of $\mathbf{w}$. Define the cycle $\beta := [\mathbf{w}_1]_{\rho}$ and the subdigraph $D'$ with edge set $E\ev{D}\setminus E\ev{\beta}$. Then, $\mathbf{w} \setminus \mathbf{w}_1$ is an Eulerian closed trail of $D'$ with no interlacing pair. By the inductive hypothesis, $D' = \beta_1\ast \ldots \ast \beta_t \in \mathcal{S}$ and $\Delta^{+}\ev{D'} \leq 2$. We claim that $V\ev{D'} \cap V\ev{\beta} = \{v_i\}$. Suppose, for a contradiction, that there is some index $k$ such that $ i < k < j $ and $v_k \in (V\ev{D'} \cap V\ev{\beta}) \setminus \{ v_i \}$. As in the proof of the implication \textit{(4)} $\Longrightarrow$ \textit{(1)}, we have $v_k = v_r$, for some $r\in [j+1,d]$. This leads to a contradiction, as $\ev{v_i,v_k}$ forms an interlacing pair. Therefore, we obtain $D = D' \ast \beta \in \mathcal{S}$. Finally, it follows from (\ref{eq:3}) that the out-degree of $v_i$ is at most $2$. 
\end{proof}

By the preceding result, any loopless digraph with a unique Eulerian circuit is a Christmas cactus, and in particular has a vertex of out-degree $1$. Therefore, a loopless $2$-in, $2$-out digraph $D$ has more than one Eulerian circuit, and any Eulerian circuit of $D$ admits an interlacing pair. 

\begin{remark}
\label{rmk:vacuous}
We can see that previous theory easily generalizes to digraphs with loops. A digraph $D$ is \textit{loop-allowed}, provided there are $d_v\geq 0$ loops at each vertex $v\in V\ev{D}$. If $d_v = 0$ for all $v$, then $D$ is called \textit{loopless} (loopless digraphs are loop-allowed). All the preceding definitions naturally generalize to loop-allowed digraphs. We use the same notation $\mathcal{S}$ for the family of bridgeless cactus loop-allowed digraphs. For a loop-allowed digraph $D$, let $\underline{D}$ be the loopless digraph on $|V\ev{D}| + \sum_{v\in V\ev{D}} d_v$ vertices, obtained by replacing each loop with a digon. Then, we note that blocks (i.e., cycles) of $\underline{D}$ and $D$ are in one-to-one correspondence, $\underline{D} \in \mathcal{S}$ if and only if $D \in \mathcal{S}$, $\Delta^{+}\ev{ \underline{D} } = \Delta^{+}\ev{D}$, $\mathbb{C}\ev{ \underline{D} }  = \mathbb{C}\ev{D} $, a circuit $z\in \mathfrak{C}\ev{\underline{D} } $ has an interlacing pair if and only if the corresponding circuit in $D$ has an interlacing pair, and the posets $ T\ev{\underline{D}}$ and $ T\ev{D}$ are isomorphic. It follows that both \Cref{thm:unique_decomp_equiv_digraph} and \Cref{prop:unique_eulerian} generalize with no changes to digraphs possibly with loops. In particular, we have a generalization of \cite[p.~202, Lemma 3]{bollobas}, where the condition of $2$-in, $2$-out is removed and the converse statement is supplied. 
\end{remark}

\begin{remark}
As an application, consider the De Bruijn digraph $D_{n}$ (\cite{li_zhang}), for $n \geq 1$:
\begin{align*}
&V\ev{D_n} = \{ \ev{s_1,\ldots,s_n} : s_i \in \{0,1\}, \ \forall i \in [n] \}  \\ 
&E\ev{D_{n}} = \{ \ev{ \ev{s_1,\ldots,s_n}, \ev{t_1,\ldots,t_n} } : s_i = t_{i-1} , \ \forall i \in [2,n]\}
\end{align*}
It is known that $D_{n}$ is Eulerian, has $2^{n}$ vertices, $2^{n+1}$ edges and $2$ loops. Eulerian circuits of $D_{n}$ are in one-to-one correspondence with De Bruijn sequences of order $n+1$, on a size-2 alphabet. By \cite[p.~196]{li_zhang}, there are $|\mathfrak{C}\ev{D_{n}}| =  2^{2^{n}-(n+1)} $ Eulerian circuits. For $n\geq 2$, note that $|\mathfrak{C}\ev{D_{n}}| > 1 $. By \Cref{prop:unique_eulerian} and the previous remark, an Eulerian circuit of $D_{n}$ has an interlacing pair $\ev{a,b}$, for any $n\geq 2$. As the maximum degree of $D_n$ is $2$, it also follows by (\ref{eq:3}) that $a\neq b$. In other words, in any cyclic De Bruijn sequence of order $n+1\geq 3$ on a size-$2$ alphabet, we can find a pair $\ev{a,b}$ of distinct strings of length $n$ such that they appear in the order $a-b-a-b$. 
\end{remark}

\section{Eulerian Multigraphs with a Unique Partition into Undirected Cycles}
\label{sec:multi_graph_partition}
In this section, we consider partitions of a connected loopless multigraph into ``undirected cycles". A connected multigraph $Y$ is an \textit{undirected cycle}, if each vertex of $Y$ has degree $2$. The \textit{length} of an undirected cycle $Y$ is the number of edges of $Y$. The smallest undirected cycle is of length $2$, called the \textit{undirected digon}. As in \Cref{def:bridgeless_cactus}, the closure of the collection of finite undirected cycles under the operation $\ast$ is denoted by $\mathcal{S}$. This does not generate any confusion, since they describe different type of objects; directed multigraphs in one case, and undirected multigraphs in the other. Given a multigraph $X$, then we define:
$$
\mathcal{F}\ev{X} := \{ Y: Y \text{ is a subgraph of }X \text{ and } Y \text{ is an undirected cycle}\}
$$
When $X$ is a multigraph, we refer to the elements of $\mathcal{B}\ev{X}$ as ``directed cycles", to distinguish them from the undirected cycles of $X$.
\begin{remark}
Directed cycles of a multigraph $X$ are in bijection with orientations of its undirected cycles: 
$$
\mathcal{B}\ev{X} \leftrightarrow \bigsqcup_{ Y \in \mathcal{F} \ev{X} } \mathcal{O}\ev{Y}
$$
where an undirected cycle $Y$ of length $2$ has a unique orientation, which is a (directed) digon and undirected cycles of length $\geq 3$ have $2$ orientations each. 
\end{remark}
\begin{remark}
\label{rmk:cycles_orientation_restriction}
Let $X$ be a fixed multigraph. For any orientation $O\in \mathcal{O}\ev{X}$:
\begin{enumerate}
\item There is a natural injection,
$$
\begin{tikzcd}
\mathcal{B}\ev{ O } \arrow[r,shift left=2pt,"f"] & \mathcal{F}\ev{ X } 
\end{tikzcd} 
$$
where $f\ev{\beta}$ is the undirected cycle underlying $\beta$.
\item $X\in \mathcal{S}$ if and only if $O\in\mathcal{S}$.
\end{enumerate}
\end{remark}

The \textit{cyclomatic number} or \textit{circuit rank}, $\mathbb{C}\ev{X}$, of a connected multigraph $X$ is the smallest number of edges which must be removed such that no cycle remains (see \cite{wolfram_circuit_rank}). In other words, $\mathbb{C}\ev{X} = \min\{|A|: A\subseteq E\ev{X} \text{ and } \mathcal{F}\ev{E\ev{X} \setminus A} = \emptyset \}$. The cyclomatic number of a connected multigraph can be calculated by the formula, $\mathbb{C}\ev{X}=|E\ev{X}|-|V\ev{X}|+1$ (see \cite[Corollary~4.5, p.~39]{harary_cycle_rank} and \cite[p.~26, Theorem 1.9.57]{diestel}). Clearly, $\mathbb{C}\ev{X} \leq |\mathcal{F}\ev{X}|$ holds. It is well-known (\cite[p.~137]{sedlar}, \cite[p.~153]{volkmann}) that, for a simple graph $X$, equality holds if and only if $X$ is a cactus graph.

For a connected multigraph $X$, the following are equivalent:
\begin{enumerate}
\item[\textit{(1)}] $X$ has a strong orientation (an orientation that is strongly connected).
\item[\textit{(2)}] $X$ is bridgeless. 
\item[\textit{(3)}] Every edge of $X$ is contained in at least one (undirected) cycle. 
\end{enumerate}
where the equivalence \textit{(1)}$\Longleftrightarrow$\textit{(2)} is due to Robbin's Theorem (\cite[p.~281]{robbins}). If $X$ is Eulerian, then it is bridgeless.

We end this section with the characterization of multigraphs with a unique partition into (undirected) cycles. 
\begin{corollary}
\label{cor:unique_decomp_equiv_multi_graph}
Given a connected Eulerian multigraph $X$, then the following are equivalent:
\begin{enumerate}
\item[\textit{(1)}]	No two cycles of $X$ share an edge, i.e., $X$ is a (bridgeless) cactus multigraph.
\item[\textit{(2)}] Every edge of $X$ is contained in exactly one cycle. 
\item[\textit{(3)}] $\mathcal{F}\ev{X} $ is a partition of $X$ into cycles.
\item[\textit{(4)}] $X$ has a unique partition into cycles.
\item[\textit{(5)}] $X \in \mathcal{S}$. 
\item[\textit{(6)}] Cyclomatic number of $X$ is equal to the number of cycles, i.e. $\mathbb{C}\ev{X} = |\mathcal{F}\ev{X}|$. 
\end{enumerate}
\end{corollary}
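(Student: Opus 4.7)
The plan is to deduce \Cref{cor:unique_decomp_equiv_multi_graph} from the digraph analogue, \Cref{thm:unique_decomp_equiv_digraph}, by passing to Eulerian orientations of $X$. Since $X$ is a connected Eulerian multigraph, it admits at least one orientation $O \in \mathcal{O}\ev{X}$ that is itself a connected Eulerian digraph (orient edges consecutively along an Eulerian circuit of $X$), so \Cref{thm:unique_decomp_equiv_digraph} applies to $O$. The equivalences \textit{(1)} $\Leftrightarrow$ \textit{(2)} $\Leftrightarrow$ \textit{(3)} hold directly in the undirected setting: since $X$ is Eulerian and hence bridgeless, every edge lies in at least one undirected cycle, and the three conditions are merely reformulations of pairwise edge-disjointness of $\mathcal{F}\ev{X}$.

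The heart of the proof is to transfer conditions \textit{(4)}, \textit{(5)}, and \textit{(6)} between $X$ and $O$. For the direction \textit{(5)} $\Rightarrow$ (others): by \Cref{rmk:cycles_orientation_restriction}(2), $X \in \mathcal{S}$ implies $O \in \mathcal{S}$, so \Cref{thm:unique_decomp_equiv_digraph} delivers all eight digraph conditions for $O$. An induction on the $\ast$-decomposition $O = \beta_1 \ast \ldots \ast \beta_t$ then shows $X = Y_1 \ast \ldots \ast Y_t$, where $Y_i$ is the undirected cycle underlying $\beta_i$, and $\mathcal{F}\ev{X} = \{Y_1, \ldots, Y_t\}$, upgrading the injection $\mathcal{B}\ev{O} \hookrightarrow \mathcal{F}\ev{X}$ of \Cref{rmk:cycles_orientation_restriction}(1) to a bijection. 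Combined with $\mathbb{C}\ev{X} = \mathbb{C}\ev{O}$, this bijection transfers the remaining conditions, including uniqueness of the partition, from $O$ to $X$.

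For the converses: \textit{(1)} for $X$ immediately gives \textit{(1)} for $O$ via the injection (distinct directed cycles in $O$ have distinct underlying undirected cycles in $X$), and \textit{(6)} for $X$ gives \textit{(6)} for $O$ via the chain $\mathbb{C}\ev{O} \le |\mathcal{B}\ev{O}| \le |\mathcal{F}\ev{X}| = \mathbb{C}\ev{X} = \mathbb{C}\ev{O}$, where the left inequality is (\ref{eq:1}). In either case, \Cref{thm:unique_decomp_equiv_digraph} then yields $O \in \mathcal{S}$, whence $X \in \mathcal{S}$ by \Cref{rmk:cycles_orientation_restriction}(2). The delicate case, and the main obstacle, is \textit{(4)} $\Rightarrow$ \textit{(5)}: a unique undirected partition of $X$ need not lift to a unique directed partition of the initially chosen $O$. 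I would circumvent this by replacing $O$ with an orientation $O'$ adapted to the unique partition $\{Y_1, \ldots, Y_t\}$, namely, by orienting each $Y_i$ as a directed cycle; the resulting $O'$ is Eulerian because each such oriented cycle contributes one in-edge and one out-edge at every vertex it visits. Any directed partition of $O'$ projects to an undirected partition of $X$, which must equal $\{Y_1, \ldots, Y_t\}$ by uniqueness, and the orientations of the component cycles are then forced by $O'$, giving a unique directed partition of $O'$. Hence \textit{(4)} holds for $O'$, and \textit{(5)} for $X$ follows as in the preceding cases.
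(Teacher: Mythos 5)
Your proposal is correct, and it follows the paper's overall strategy: reduce to the digraph case via an orientation and invoke \Cref{thm:unique_decomp_equiv_digraph} (or \Cref{lem:unique_decomp_equiv_treelike}) together with \Cref{rmk:cycles_orientation_restriction}. The differences are in execution. For \textit{(4)} $\Longrightarrow$ \textit{(5)} the paper keeps one orientation $O$ and notes that the injection $\mathcal{B}\ev{O}\hookrightarrow\mathcal{F}\ev{X}$ induces an injection on partitions, so uniqueness for $X$ gives at most one partition of $O$, existence coming from Veblen once $O$ is Eulerian; your adapted orientation $O'$, obtained by orienting each cycle of the unique partition, is an equally valid alternative that makes existence immediate, and your explicit insistence on an Eulerian orientation is in fact more careful than the paper's ``any orientation'' phrasing (an arbitrary orientation need not be Eulerian, lie in $\mathcal{S}$, or admit any directed cycle partition). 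For \textit{(6)} the paper treats \textit{(1)} $\Longleftrightarrow$ \textit{(6)} as clear (the multigraph analogue of the cactus characterization cited before the corollary), whereas you route it through (\ref{eq:1}) via $\mathbb{C}\ev{O}\le|\mathcal{B}\ev{O}|\le|\mathcal{F}\ev{X}|=\mathbb{C}\ev{X}=\mathbb{C}\ev{O}$, a nice reuse of the digraph machinery. For \textit{(5)} $\Longrightarrow$ the rest, the paper inducts on $|\mathcal{F}\ev{X}|$ directly, while you transfer through the bijection $\mathcal{B}\ev{O}\leftrightarrow\mathcal{F}\ev{X}$; this works, though ``transfers uniqueness'' hides the short argument that any partition of $X$ is a subcollection of $\mathcal{F}\ev{X}$ and, when $\mathcal{F}\ev{X}$ is itself a partition, must equal it.
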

\begin{proof}
The equivalences \textit{(1)} $\Longleftrightarrow$ \textit{(i)}, for $i=2,3,6$ are clear.

\textit{(3)} $\Longrightarrow$ \textit{(5)}: Let $O\in \mathcal{O}\ev{X}$ be any orientation. By \Cref{rmk:cycles_orientation_restriction} Part 1, the cycles of $O$ are mapped to cycles of $X$ injectively, so $\mathcal{B}\ev{O}$ is a partition of $O$. By \Cref{lem:unique_decomp_equiv_treelike} and \Cref{rmk:cycles_orientation_restriction} Part 2, we obtain $X\in \mathcal{S}$. 

\textit{(5)} $\Longrightarrow$ \textit{(1)}, \textit{(5)} $\Longrightarrow$ \textit{(4)}: By induction on $|\mathcal{F}\ev{X}|$. 

\textit{(4)} $\Longrightarrow$ \textit{(5)}: Let $O\in \mathcal{O}\ev{X}$ be any orientation. The injection in \Cref{rmk:cycles_orientation_restriction} Part 1 induces an injection of partitions:
\vspace{-2mm}
$$
\begin{tikzcd}
\{ \mathcal{A} \subseteq \mathcal{B}\ev{ O } : \text{ $\mathcal{A}$ is a partition of $O$} \}  \arrow[r,shift left=2pt,"\widehat{f}"] & \{ \mathcal{A} \subseteq \mathcal{F}\ev{ X } : \text{ $\mathcal{A}$ is a partition of $X$} \} 
\end{tikzcd}
$$

In particular, $O$ has a unique partition into cycles. Again, the statement follows from \Cref{lem:unique_decomp_equiv_treelike} and \Cref{rmk:cycles_orientation_restriction} Part 2.
\end{proof}


\bibliography{bibliography.bib}

\begin{thebibliography}{25}
\providecommand{\natexlab}[1]{#1}
\providecommand{\url}[1]{\texttt{#1}}
\expandafter\ifx\csname urlstyle\endcsname\relax
  \providecommand{\doi}[1]{doi: #1}\else
  \providecommand{\doi}{doi: \begingroup \urlstyle{rm}\Url}\fi

\bibitem[Acosta and Tomescu(2024)]{acosta}
N.~O. Acosta and A.~I. Tomescu.
\newblock Simplicity in eulerian circuits: Uniqueness and safety.
\newblock \emph{Information Processing Letters}, 183:\penalty0 106421, 2024.
\newblock ISSN 0020-0190.

\bibitem[Angelini et~al.(2010)Angelini, Frati, and Grilli]{angelini}
P.~Angelini, F.~Frati, and L.~Grilli.
\newblock An algorithm to construct greedy drawings of triangulations.
\newblock \emph{Journal of Graph Algorithms and Applications}, 14\penalty0
  (1):\penalty0 19--51, 2010.

\bibitem[Arratia et~al.(2004)Arratia, Bollobás, and Sorkin]{bollobas}
R.~Arratia, B.~Bollobás, and G.~B. Sorkin.
\newblock The interlace polynomial of a graph.
\newblock \emph{Journal of Combinatorial Theory, Series B}, 92\penalty0
  (2):\penalty0 199--233, 2004.
\newblock ISSN 0095-8956.
\newblock Special Issue Dedicated to Professor W.T. Tutte.

\bibitem[Bahrani and Lumbroso(2018)]{bahrani}
M.~Bahrani and J.~Lumbroso.
\newblock Split-decomposition trees with prime nodes: Enumeration and random
  generation of cactus graphs.
\newblock In M.~Nebel and S.~Wagner, editors, \emph{2018 Proceedings of the
  15th Workshop on Analytic Algorithmics and Combinatorics, ANALCO 2018}, pages
  143--157, United States, 2018. Society for Industrial and Applied Mathematics
  Publications.

\bibitem[Bondy and Murty(2008)]{bondy}
J.~A. Bondy and U.~S.~R. Murty.
\newblock \emph{Graph Theory}.
\newblock Springer Publishing Company, 1st edition, 2008.

\bibitem[Cooper and Okur(2025)]{co3}
J.~Cooper and U.~Okur.
\newblock Partitions of an eulerian digraph into circuits, 2025.

\bibitem[Diestel(2017)]{diestel}
R.~Diestel.
\newblock \emph{Graph Theory}.
\newblock Springer Publishing Company, Incorporated, 5th edition, 2017.

\bibitem[{G}raham et~al.(1995){G}raham, {G}rötschel, and (eds.)]{helly}
R.~{G}raham, M.~{G}rötschel, and L.~L. (eds.).
\newblock \emph{Handbook of Combinatorics}.
\newblock Elsevier, 1995.

\bibitem[Gross et~al.(2019)Gross, Yellen, and Anderson]{gross}
J.~L. Gross, J.~Yellen, and M.~Anderson.
\newblock \emph{Graph Theory and Its Applications}.
\newblock Textbooks in Mathematics. CRC Press, Boca Raton, FL, 3rd edition,
  2019.
\newblock First issued in paperback.

\bibitem[Harary(1969)]{harary_cycle_rank}
F.~Harary.
\newblock \emph{Graph Theory}.
\newblock Addison-Wesley series in mathematics. Addison-Wesley Publishing
  Company, 1969.
\newblock ISBN 9788185015552.

\bibitem[Harary and Uhlenbeck(1953)]{husimi}
F.~Harary and G.~E. Uhlenbeck.
\newblock On the number of {H}usimi trees, {I}.
\newblock \emph{Proceedings of the National Academy of Sciences of the United
  States of America}, 39\penalty0 (4):\penalty0 315--322, 1953.

\bibitem[Leighton and Moitra(2010)]{leighton}
T.~Leighton and A.~Moitra.
\newblock Some results on greedy embeddings in metric spaces.
\newblock \emph{Discrete \& Computational Geometry}, 44\penalty0 (3):\penalty0
  686--705, 10 2010.

\bibitem[Li and Zhang(1991)]{li_zhang}
X.~Li and F.~Zhang.
\newblock On the numbers of spanning trees and eulerian tours in generalized de
  bruijn graphs.
\newblock \emph{Discrete Mathematics}, 94\penalty0 (3):\penalty0 189--197,
  1991.
\newblock ISSN 0012-365X.

\bibitem[McKee and McMorris(1999)]{mckee}
T.~A. McKee and F.~R. McMorris.
\newblock \emph{Topics in Intersection Graph Theory}.
\newblock Discrete Mathematics and Applications. Society for Industrial and
  Applied Mathematics, 1999.
\newblock ISBN 9780898719802.

\bibitem[Pevzner(1989)]{pevzner}
P.~A. Pevzner.
\newblock l-tuple dna sequencing: computer analysis.
\newblock \emph{{J}ournal of {B}iomolecular structure and dynamics}, 7\penalty0
  (1):\penalty0 63--73, 1989.

\bibitem[Prutkin(2016)]{prutkin}
R.~Prutkin.
\newblock A note on the area requirement of euclidean greedy embeddings of
  {C}hristmas cactus graphs, 2016.

\bibitem[Robbins(1939)]{robbins}
H.~E. Robbins.
\newblock A theorem on graphs, with an application to a problem of traffic
  control.
\newblock \emph{The American Mathematical Monthly}, 46\penalty0 (5):\penalty0
  281--283, 1939.
\newblock ISSN 00029890, 19300972.

\bibitem[Schrijver(2003)]{schrijver}
A.~Schrijver.
\newblock \emph{Combinatorial Optimization: Polyhedra and Efficiency}.
\newblock Number v. 1 in Algorithms and Combinatorics. Springer, 2003.
\newblock ISBN 9783540443896.

\bibitem[Sedlar and Škrekovski(2022)]{sedlar}
J.~Sedlar and R.~Škrekovski.
\newblock Vertex and edge metric dimensions of cacti.
\newblock \emph{Discrete Applied Mathematics}, 320:\penalty0 126--139, 2022.
\newblock ISSN 0166-218X.

\bibitem[van Aardenne-Ehrenfest and de~Bruijn(1951)]{best}
T.~van Aardenne-Ehrenfest and N.~de~Bruijn.
\newblock Circuits and trees in oriented linear graphs.
\newblock \emph{Simon Stevin: Wis-en Natuurkundig Tijdschrift}, 28:\penalty0
  203--217, 1951.
\newblock ISSN 0037-5454.

\bibitem[Veblen(1912)]{veblen}
O.~Veblen.
\newblock An application of modular equations in analysis situs.
\newblock \emph{Annals of Mathematics}, 14\penalty0 (1/4):\penalty0 86--94,
  1912.

\bibitem[Volkmann(1996)]{volkmann}
L.~Volkmann.
\newblock Estimations for the number of cycles in a graph.
\newblock \emph{Periodica Mathematica Hungarica}, 33\penalty0 (2):\penalty0
  153--161, 1996.
\newblock ISSN 1588-2829.

\bibitem[Waterman(1995)]{waterman}
M.~S. Waterman.
\newblock \emph{Introduction to Computational Biology: Maps, Sequences and
  Genomes}.
\newblock Chapman \& Hall/CRC Interdisciplinary Statistics. Taylor \& Francis,
  1995.
\newblock ISBN 9780412993916.

\bibitem[Weisstein(2023{\natexlab{a}})]{weisstein}
E.~W. Weisstein.
\newblock Cactus graph.
\newblock 2023. From MathWorld--A Wolfram Web Resource. URL:
  \url{https://mathworld.wolfram.com/CactusGraph.html}. Accessed: 2025-1-14,
  2023{\natexlab{a}}.

\bibitem[Weisstein(2023{\natexlab{b}})]{wolfram_circuit_rank}
E.~W. Weisstein.
\newblock Circuit rank.
\newblock 2023. From MathWorld--A Wolfram Web Resource. URL:
  \url{https://mathworld.wolfram.com/CircuitRank.html}. Accessed: 2025-1-14,
  2023{\natexlab{b}}.

\end{thebibliography}

\end{document}